\newtheorem{question}{Question}[section]
\newtheorem{theorem}[question]{Theorem}
\newtheorem{lemma}[question]{Lemma}
\newtheorem{corollary}[question]{Corollary}
\newtheorem{definition}[question]{Definition}
\newtheorem{conjecture}[question]{Conjecture}
\title{On two topological cardinal invariants of an order-theoretic flavour}
\author{Santi Spadaro}
\address{Department of Mathematics, Faculty of Science and Engineering, York University, Toronto, ON,  M3J 1P3 Canada}
\email{sspadaro@mathstat.yorku.ca, santispadaro@yahoo.com}
\subjclass[2000]{Primary: 03E04, 54A25; Secondary: 03E35, 54D70}
\keywords{Noetherian type, box product, Chang's Conjecture for $\aleph_\omega$, Higher Suslin Line, Pixley-Roy hyperspace, OIF space}
\thanks{The author was partially supported by the Center for Advanced Studies in Mathematics at Ben Gurion University and by an INdAM-Cofund outgoing fellowship. The author is also grateful to the Fields Institute of the University of Toronto for hospitality.}
\begin{document}

\begin{abstract}
Noetherian type and Noetherian $\pi$-type are two cardinal functions which were introduced by Peregudov in 1997, capturing some properties studied earlier by the Russian School. Their behavior has been shown to be akin to that of the \emph{cellularity}, that is the supremum of the sizes of pairwise disjoint non-empty open sets in a topological space.  Building on that analogy, we study the Noetherian $\pi$-type of $\kappa$-Suslin Lines, and we are able to determine it for every $\kappa$ up to the first singular cardinal.  We then prove a consequence of Chang's Conjecture for $\aleph_\omega$ regarding the Noetherian type of countably supported box products which generalizes a result of Lajos Soukup. We finish with a connection between PCF theory and the Noetherian type of certain Pixley-Roy hyperspaces.
\end{abstract}

\maketitle

\section{Introduction and notation}
In this paper the letter $X$ always denotes a topological space and every topological space is assumed to have at least two points. Cardinals are initial ordinals and they are provided with the order topology. The symbol $cov(\kappa, <\mu)$ denotes the cofinality of the partial order $([\kappa]^{<\mu}, \subseteq)$. All undefined notions can be found in \cite{En} for Topology and \cite{Je} for Set Theory. A \emph{cardinal function} is a function from the class of all topological spaces into the class of all cardinals which is invariant under homeomophisms. Some cardinal functions measure the least size of a given family of subsets of topological spaces. For instance, the \emph{$\pi$-weight of $X$} ($\pi w(X)$) is defined as the minimum size of a $\pi$-base of $X$ (that is, a set $\mathcal{P}$ of non-empty open subsets of $X$, such that for every non-empty open set $U \subset X$, there is $P \in \mathcal{P}$ with $P \subset U$). Other cardinal functions are defined as the supremum of the sizes of certain objects in a topological space. For example, the \emph{cellularity} of $X$ ($c(X)$) is defined as the supremum of the sizes of families of pairwise disjoint non-empty open sets in $X$. The cardinal functions we take up in this paper deviate from this two-faced model and all stem from the following order-theoretic definition. 

\begin{definition}
\cite{MilHom} Let $(P, \leq)$ be a partially ordered set. We say that $P$ is \emph{$\kappa^{op}$-like} if every bounded subset of $P$ has cardinality strictly less than $\kappa$.
\end{definition}

We are interested in the least $\kappa$ such that $X$ has a given family of subsets satisfying the above property under reverse containment. We focus on bases and $\pi$-bases, thus giving rise to the following two \emph{order-theoretic} cardinal functions, which were introduced by Peregudov in \cite{PNt}.

\begin{definition}
Let $X$ be a space. The \emph{Noetherian type of $X$} ($Nt(X)$) is the minimum cardinal $\kappa$ such that $X$ has a $\kappa^{op}$-like base, with respect to $\supseteq$. The \emph{Noetherian $\pi$-type of $X$} ($\pi Nt(X)$) is the minimum cardinal $\kappa$ such that $X$ has a $\kappa^{op}$-like $\pi$-base.
\end{definition}

Spaces of countable Noetherian type were called \emph{Noetherian} in \cite{PS} and \cite{Ma} and \emph{OIF spaces} in \cite{BBBGLM}. Based on their definition, Noetherian type and Noetherian $\pi$-type may seem like order-theoretic versions of the weight and the $\pi$-weight.  However their behavior seem to deviate a lot from their classical counterparts. Spaces of countable Noetherian type can in fact have arbitrarily large $\pi$-weight (simply take a Cantor cube of weight $\kappa$). These cardinal functions rather exhibit a behavior which is closer to the cellularity. Some evidence to this claim is provided by David Milovich's theorem from \cite{MilHom} saying that compact homogeneous dyadic spaces have countable Noetherian type, and by a bound for the Noetherian type in the $G_\delta$ topology which was proved in \cite{KMS}. We wish to elaborate more on this affinity by studying the Noetherian $\pi$-type of higher Suslin Lines in the first section. In \cite{KMS} the authors showed how the Noetherian type of a deceivingly simple space like $(2^{\aleph_\omega}, \tau_{<\omega_1})$, that is, the product of $\aleph_\omega$ many copies of the two-point discrete space, cannot be determined in ZFC. In particular this Noetherian type takes its largest possible value if Chang's Conjecture for $\aleph_\omega$ holds along with the GCH (this was first proved by Soukup in \cite{S}) and its minimum value in the constructible universe. Here we extend this result by replacing the two-point discrete space with a much larger class of spaces, which includes all scattered spaces. We finish with another topological equivalent of a set-theoretic statement introduced in \cite{KMS}, which leads to sharp bounds to the Noetherian type of certain Pixley-Roy Hyperspaces. In particular, we find that the gap between the Noetherian type of a space and the Noetherian type of its Pixley-Roy hyperspace can be very large.
 
\section{On the Noetherian $\pi$-type of higher Suslin Lines}
Recall that a $\kappa$-Suslin Line $L$ is a linearly ordered set satisfying the following properties:

\begin{enumerate}
\item $L$ is \emph{dense}, that is, the order topology on $L$ has no isolated points.
\item $L$ is \emph{complete}, that is, every non-empty bounded subset of $L$ has an infimum and a supremum.
\item $L$ has \emph{cellularity no greater than $\kappa$}, that is no pairwise disjoint family of intervals in $L$ has size $\kappa^+$.
\item $L$ has \emph{density greater than $\kappa$}, that is no set of size $\kappa$ is dense in $L$ with the order topology.
\end{enumerate}

An $\aleph_0$-Suslin Line is thus a usual Suslin Line. The existence of a $\kappa$-Suslin Line is known to be consistent with ZFC, for every cardinal $\kappa$. 

David Milovich (\cite{MilHom}, Theorem 2.26) proved that the Noetherian $\pi$-type of a Suslin Line is uncountable. We modify some arguments of Juh\'asz, Soukup and Szentmikl\'ossy from \cite{JSS} to determine the exact Noetherian $\pi$-type of higher Suslin Lines. We begin with a lemma which must be well-known.

\begin{lemma} \label{continuous}
Let $L$ be a complete dense linear order. Then $L$ contains a dense set of points of countable $\pi$-character.
\end{lemma}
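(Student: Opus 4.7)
The plan is to localize the problem: given any non-empty open interval $(a,b) \subseteq L$, it suffices to exhibit one point $x \in (a,b)$ with $\pi\chi(x) \leq \omega$. I would aim to produce $x$ as the supremum of a strictly increasing $\omega$-sequence contained in $(a,b)$ and bounded away from $b$, so that $x$ itself is still forced to lie in $(a,b)$ rather than slipping out to the right endpoint.

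Concretely, first pick any $c \in (a,b)$, which is possible by density. Then inductively construct $a_0 < a_1 < \cdots$ inside the non-empty interval $(a,c)$: having chosen $a_n$, density of the order supplies some $a_{n+1} \in (a_n, c)$. Completeness of $L$ guarantees that $x := \sup_n a_n$ exists, and the bounds $a < a_0 \leq x$ and $x \leq c < b$ show $x \in (a,b)$.

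It then remains to verify that the family $\{(a_n, x) : n \in \omega\}$ is a local $\pi$-base at $x$. Each $(a_n,x)$ is a non-empty open interval since $a_n < x$ and $L$ is dense. Given any basic neighborhood $(p,q)$ of $x$, we have $p < x = \sup_n a_n$, so some $a_n$ exceeds $p$, whence $(a_n, x) \subseteq (p,x) \subseteq (p,q)$. This witnesses $\pi\chi(x) \leq \omega$, and since $(a,b)$ was arbitrary the set of such points is dense.

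There is no genuine obstacle here: density of the order supplies both the points needed at each inductive step and the non-emptiness of the approximating intervals, while completeness produces the supremum. The only subtle point worth flagging is the initial choice of the auxiliary $c$: working inside $(a,c)$ rather than all of $(a,b)$ is what prevents the supremum from coinciding with $b$, so that the point $x$ we build actually belongs to the prescribed open set.
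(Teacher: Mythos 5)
Your proof is correct and follows essentially the same route as the paper: pick $c\in(a,b)$, build a strictly increasing sequence inside $(a,c)$ by density, take its supremum $x$ by completeness, and check that the intervals $(a_n,x)$ form a countable local $\pi$-base at $x$. You supply slightly more detail (verifying $x\in(a,b)$ and the $\pi$-base property explicitly), but the argument is the same.
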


\begin{proof}
Let $(a,b)$ be a non-empty open interval. We claim that $(a,b)$ contains a point of countable $\pi$-character. Let $c \in (a,b)$ and suppose you have constructed $\{x_i: i \leq n \} \subset (a,c)$. Choose $x_{n+1} \in (x_n,c)$. Then $y=\sup \{x_n: n < \omega \} \in (a,b)$ and $\{(x_n,y): n < \omega \}$ is a local $\pi$-base at $y$.
\end{proof}

The argument used by Milovich to prove that every Suslin Line has uncountable Noetherian type extends verbatim to prove the following lemma.

\begin{lemma} \label{lowerbound}
Let $L$ be a $\kappa$-Suslin Line. Then $\pi Nt(L) \geq \aleph_1$.
\end{lemma}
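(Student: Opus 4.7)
The plan is to follow Milovich's argument from \cite{MilHom}, Theorem 2.26, for Suslin lines, which goes through verbatim in the $\kappa$-Suslin setting. Assume toward a contradiction that $\mathcal{P}$ is an $\aleph_0^{op}$-like $\pi$-base for $L$. After a standard refinement using the fact that $L$ is a LOTS, we may assume every element of $\mathcal{P}$ is a non-empty open interval.

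For each $P \in \mathcal{P}$ the set $\mathcal{P}(P) := \{Q \in \mathcal{P} : Q \supseteq P\}$ is finite by hypothesis, so the rank $\rho(P)$, defined as the length of the longest strictly ascending $\subsetneq$-chain in $\mathcal{P}$ beginning at $P$, is a finite integer. Stratify $\mathcal{P} = \bigsqcup_{n < \omega} \mathcal{P}_n$ with $\mathcal{P}_n = \{P : \rho(P) = n\}$; each level is a $\subseteq$-antichain, since prepending $P$ to a maximal chain out of $Q$ with $P \subsetneq Q$ at the same level would raise $\rho(P)$.

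The crux, specific to linear orders, is that any $\subseteq$-antichain of open intervals in $L$ has cardinality at most $\kappa$. For an antichain $\{(a_\alpha, b_\alpha) : \alpha < \lambda\}$, the left endpoints must be pairwise distinct (else one interval would contain another) and, when sorted, force the right endpoints to be strictly increasing in the same order. Hence $\{a_\alpha : \alpha < \lambda\}$ is a strictly increasing sequence in $L$, so if $\lambda = \kappa^+$ then density of $L$ makes $(a_\alpha, a_{\alpha+1})$ a non-empty open interval for each $\alpha$, and this family is pairwise disjoint of size $\kappa^+$, contradicting $c(L) \leq \kappa$. Therefore $|\mathcal{P}_n| \leq \kappa$ for every $n$, so $|\mathcal{P}| \leq \aleph_0 \cdot \kappa = \kappa$, contradicting $|\mathcal{P}| \geq \pi w(L) \geq d(L) > \kappa$ by the definition of $\kappa$-Suslin Line.

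The main obstacle is the reduction to an interval $\pi$-base. Replacing each $P \in \mathcal{P}$ by an interval component need not preserve the $\aleph_0^{op}$-like condition, since a single interval may sit inside many elements of $\mathcal{P}$ that do not themselves contain $P$. Working directly with arbitrary open sets is also not an option, since antichains of open sets in a ccc space can be much larger than $c(L)$, so the interval structure is essential for the cellularity bound. Milovich's original proof performs this refinement with careful combinatorial bookkeeping, and the same device transfers unchanged to the $\kappa$-Suslin setting.
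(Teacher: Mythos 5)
The paper gives no proof of this lemma beyond the remark that Milovich's argument extends verbatim, so the only question is whether your reconstruction is sound, and it is not. The fatal step is the ``crux'' claim that every $\subseteq$-antichain of non-empty open intervals in $L$ has cardinality at most $\kappa = c(L)$. This is false for dense complete linear orders in general: in $\mathbb{R}$ the intervals $(x, x+1)$, $x \in \mathbb{R}$, are pairwise $\subseteq$-incomparable, an antichain of size $2^{\aleph_0}$ in a space of cellularity $\aleph_0$. Your argument for the claim nowhere uses the non-separability of $L$, so it would prove this false general statement; the precise error is the passage from ``the left endpoints $\{a_\alpha : \alpha < \lambda\}$ are pairwise distinct'' to ``$(a_\alpha, a_{\alpha+1})$ is a non-empty interval for each $\alpha$.'' A set of $\kappa^+$ points of $L$ need not be well-ordered by the order of $L$, and even after sorting it need not have any adjacent pairs, let alone $\kappa^+$ adjacent pairs with non-empty gaps between them. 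Producing $\kappa^+$ pairwise disjoint intervals separating $\kappa^+$ given points amounts to exhibiting a discrete subset of size $\kappa^+$, i.e.\ to bounding the antichain by the spread rather than by the cardinality of the set of endpoints, and spread can be far smaller than cardinality (again, $\mathbb{R}$). Moreover the claim itself, and not merely your proof of it, can fail for Suslin lines: if $L$ arises from a homogeneous Suslin tree, two incomparable nodes on the same level yield disjoint intervals containing order-isomorphic uncountable sets $C \cong D$ via an increasing bijection $g$, and $\{(c, g(c)) : c \in C\}$ is then an uncountable antichain of non-empty open intervals. So the rank decomposition into countably many antichains, while correct, cannot be closed off the way you propose.

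A secondary gap, which you acknowledge but do not repair, is the reduction to a $\pi$-base consisting of intervals: shrinking each $P \in \mathcal{P}$ to an interval can indeed destroy the $\aleph_0^{op}$-like property, and deferring to unspecified ``combinatorial bookkeeping'' in \cite{MilHom} does not discharge the obligation. As written, the proposal establishes neither the reduction nor the antichain bound it is meant to feed, so the lemma remains unproved.
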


\begin{theorem} \label{upperbound}
Let $L$ be a $\kappa$-Suslin Line. Then $\pi Nt(L) \leq \kappa^+$.
\end{theorem}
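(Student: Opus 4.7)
The plan is to construct a $\pi$-base $\mathcal{P}$ of $L$ by transfinite recursion on $\alpha < \kappa^+$, taking $\mathcal{P} = \bigcup_{\alpha < \kappa^+} \mathcal{M}_\alpha$, where each $\mathcal{M}_\alpha$ is a maximal pairwise disjoint family of non-empty open intervals arranged in a refinement hierarchy: $\mathcal{M}_0$ is any maximal pairwise disjoint family of non-empty open intervals of $L$; for successor $\alpha + 1$, $\mathcal{M}_{\alpha+1}$ is the union, over $I \in \mathcal{M}_\alpha$, of a maximal pairwise disjoint family of proper open subintervals of $I$ with union dense in $I$; and at a limit $\alpha$, $\mathcal{M}_\alpha$ is a maximal pairwise disjoint family of non-empty open intervals, each contained in some element of $\mathcal{M}_\beta$ for every $\beta < \alpha$. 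Copies of the same interval appearing at different levels are identified. The hypothesis $c(L) \leq \kappa$ gives $|\mathcal{M}_\alpha| \leq \kappa$ at every stage.

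Counting the ``containers'' above any element of $\mathcal{P}$ is routine. If $I \in \mathcal{M}_\alpha$ and $J \in \mathcal{M}_\beta$ with $I \subseteq J$, then for $\beta \leq \alpha$ disjointness of $\mathcal{M}_\beta$ forces $J$ to be the unique $\mathcal{M}_\beta$-element containing $I$, one per level. For $\beta > \alpha$, the refinement structure puts $J$ inside some $K \in \mathcal{M}_\alpha$ containing $I$; the antichain property at level $\alpha$ then forces $K = I$, so $J \subseteq I$ and hence $J = I$ as a set. Thus $|\{J \in \mathcal{P} : I \subseteq J\}| \leq |\alpha| + 1 \leq \kappa$, and $\mathcal{P}$ is $\kappa^+$-op-like.

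The main obstacle is showing that $\mathcal{P}$ is a $\pi$-base. Assume for contradiction that some open interval $U = (a,b)$ contains no element of $\mathcal{P}$. Any $\mathcal{M}_\alpha$-interval meeting $U$ without lying inside $U$ must contain $a$ or $b$, so pairwise disjointness gives at most one ``left-bad'' (containing $a$) and one ``right-bad'' (containing $b$) interval per level. At every successor level, density of the refinement within any bad parent produces either a strictly smaller bad child (possibly switching sides, if the parent contains both endpoints) or a refinement element lying inside $U$, which is already a contradiction. Iterating across $\kappa^+$ successor stages and applying pigeonhole, at least one of the two chains has length $\kappa^+$; writing its members as $(c_\gamma, e_\gamma)$, strictness of the shrinkage forces either $\{c_\gamma\}$ or $\{e_\gamma\}$ to realize $\kappa^+$ distinct values, and pairing consecutive values produces $\kappa^+$ pairwise disjoint open intervals, contradicting $c(L) \leq \kappa$. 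The delicate point is the limit-stage behaviour: a chain can ``pinch'', meaning no element of $\mathcal{M}_\alpha$ contains $a$ (resp.\ $b$); one then checks that either the opposite chain continues or the limit-stage maximality places a subinterval of $U$ into $\mathcal{P}$, in both cases preserving the contradiction and yielding $\pi Nt(L) \leq \kappa^+$.
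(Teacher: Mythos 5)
Your construction and the $(\kappa^+)^{op}$-like count are fine, and the approach is genuinely different from the paper's (which instead takes a dense left-separated set $\{x_\alpha:\alpha<\kappa^+\}$ of points of countable $\pi$-character, uses $d(L)=\kappa^+$, and glues together countable local $\pi$-bases below the left-separating neighbourhoods; that argument is a few lines and avoids all interval combinatorics). But your proof of the key property --- that $\mathcal{P}$ is a $\pi$-base --- has a genuine gap exactly at the point you flag as delicate, and the one-sentence dichotomy you offer there does not close it. At a limit level $\lambda$, maximality of $\mathcal{M}_\lambda$ only produces an element meeting $U=(a,b)$ if there is a non-empty open interval meeting $U$ that is contained in a member of $\mathcal{M}_\beta$ for \emph{every} $\beta<\lambda$. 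The only members of $\mathcal{M}_\beta$ meeting $U$ are the (at most two) bad ones, so such an interval must lie in $\bigcap_{\beta<\lambda}I^a_\beta$ or in $\bigcap_{\beta<\lambda}I^b_\beta$. Writing $I^a_\beta=(c_\beta,e_\beta)$ with $c_\beta<a<e_\beta$, the first intersection meets $U$ in a non-empty open set if and only if $\inf_{\beta<\lambda}e_\beta>a$, and symmetrically for the $b$-chain. Nothing in your argument rules out the scenario $\inf_\beta e_\beta=a$ and $\sup_\beta c^b_\beta=b$ (both chains surviving at every $\beta<\lambda$ but their inner endpoints converging to $a$ and $b$ respectively); in that scenario no candidate interval meeting $U$ can be added to $\mathcal{M}_\lambda$, hence no member of $\mathcal{M}_\lambda$ --- or of any later level --- meets $U$, and $\mathcal{P}$ is not a $\pi$-base. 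Your alternatives ``the opposite chain continues'' and ``maximality places a subinterval of $U$ into $\mathcal{P}$'' are both false in this case, so the case analysis is incomplete. Note also that this endpoint convergence is a different phenomenon from the ``no element of $\mathcal{M}_\lambda$ contains $a$'' pinching you describe, and it can threaten even at $\lambda$ of countable cofinality, where no cellularity argument is available to stop it.

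The scenario can in fact be excluded, but only by an argument that is the real content of this route and is missing from your write-up: one shows (i) at a successor step, once $e_\beta\le b$ the value $e_\beta$ must freeze, since the unique refinement child of $I^a_\beta$ containing $a$ must have trace dense in $(a,e_\beta)$ or else some child lands inside $U$; (ii) at a limit step where $e_\mu$ strictly drops below $\inf_{\beta<\mu}e_\beta$, maximality forces either an element inside $U$ or the right-bad interval $I^b_\mu$ to abut $I^a_\mu$ exactly at $e_\mu$; and (iii) after such an abutment the left endpoint of the $b$-chain is $\ge e_\mu$ forever, which blocks any further drop of the $a$-chain's right endpoint at later limits. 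Only with (i)--(iii) does one get $\inf_\beta e_\beta>a$ at every limit, so that the induction survives all $\kappa^+$ levels and your final cellularity contradiction applies. As it stands, the proposal asserts the conclusion of this analysis without providing it, so the proof is not complete.
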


\begin{proof}
Let $D \subset L$ be the set of all points of countable $\pi$-character. By Lemma $\ref{continuous}$, $D$ is dense in $L$ and so, $d(D)=\kappa^+$. So there is a dense set $E=\{x_\alpha: \alpha < \kappa^+\} \subset D$ which is left separated in type $\kappa^+$. In other words, for every $\alpha < \kappa^+$ there is a neighbourhood $U_\alpha$ of $x_\alpha$ such that $U_\alpha \cap \{x_\beta: \beta < \alpha \} =\emptyset$. Let $\mathcal{B}_\alpha$ be a countable local base at $x_\alpha$ such that $B \subset U_\alpha$  for every $B \in \mathcal{B}_\alpha$. Then $\mathcal{B}=\bigcup \{\mathcal{B}_\alpha: \alpha < \kappa^+\}$ is a $\pi$-base such that $|\{B \in \mathcal{B}: x_\alpha \in B \}| \leq \kappa$. Now, from this observation and the fact that $E$ is dense in $X$ it follows that $\mathcal{B}$ is $(\kappa^+)^{op}$-like.
\end{proof}

Given a set map $F: S \to \mathcal{P}(S)$, recall that a set is called \emph{free for $F$} if for every $x,y \in S$ with $x \neq y$ we have that $x \notin F(y)$ and $y \notin F(x)$.

\begin{lemma} \label{hajnal}
(Hajnal's Free Set Lemma) Let $S$ be a set and $\kappa$ be a cardinal such that $\kappa < |S|$. Let $F: S \to \mathcal{P}(S)$ be a map such that $|F(x)| < \kappa$ for every $x \in S$. Then there is a set $D \subset S$ such that $|D|=|S|$ and $D$ is free for $F$.
\end{lemma}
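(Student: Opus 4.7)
The plan is to build $D$ by transfinite recursion of length $|S|$ after reducing to the regular case. If $\lambda := |S|$ is singular, partition $S$ into $\mathrm{cf}(\lambda)$ many blocks of strictly smaller regular cardinality $>\kappa$, apply the regular case on each block, and stitch the partial free sets together, carefully discarding the small set of points that violate freeness across blocks.

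Assume now $\lambda$ is regular and identify $S$ with the ordinal $\lambda$. The naive recursion picks $x_\alpha<\lambda$ at stage $\alpha$ so that $\{x_\beta:\beta\le\alpha\}$ remains free. The \emph{forward} condition $x_\alpha\notin\bigcup_{\beta<\alpha}F(x_\beta)$, together with newness, excludes at most $|\alpha|\cdot\kappa<\lambda$ candidates by regularity, so these are harmless. The \emph{backward} condition $F(x_\alpha)\cap\{x_\beta:\beta<\alpha\}=\emptyset$ is the real obstacle; a priori the set of $y\in S$ with $F(y)$ meeting the current partial set could be all of $S$, because the inverse fibers $F^{-1}(y):=\{x:y\in F(x)\}$ are not controlled by the hypothesis. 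To tame this, I would decompose $F(\alpha)=F'(\alpha)\cup F^+(\alpha)$ with $F'(\alpha):=F(\alpha)\cap\alpha$ regressive and $F^+(\alpha):=F(\alpha)\setminus\alpha$, and apply a pressing-down argument (Fodor, refined by $\Delta$-system bookkeeping where the cardinal arithmetic is delicate) to the regressive set-valued function $F'$. The goal is a subset $T\subseteq\lambda$ of cardinality $\lambda$ and a fixed root $R\in[\lambda]^{<\kappa}$ such that $F'(\alpha)=R$ for every $\alpha\in T$, or at least such that the non-root parts of $\{F'(\alpha):\alpha\in T\}$ avoid $T$ itself.

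With $T$ and $R$ in hand, I would then construct $D=\{x_\alpha:\alpha<\lambda\}\subseteq T\setminus R$ by a straightforward recursion requiring that the $x_\alpha$ be strictly increasing in $\lambda$ and that $x_\alpha\notin\bigcup_{\beta<\alpha}F^+(x_\beta)$. At each stage the forbidden set has cardinality at most $|\alpha|\cdot\kappa<\lambda$, while $T\setminus R$ is unbounded of cardinality $\lambda$, so a suitable $x_\alpha$ exists. For $\beta<\alpha$ in $D$: $\beta\in F(\alpha)=R\cup F^+(\alpha)$ would force $\beta\in R$ (impossible since $\beta\in T\setminus R$) or $\beta\in F^+(\alpha)\subseteq[\alpha,\lambda)$ (impossible since $\beta<\alpha$), so $\beta\notin F(\alpha)$; the reverse inclusion $\alpha\notin F(\beta)$ is immediate from the construction, since $\alpha\notin R$ and $\alpha\notin F^+(\beta)$ by fiat.

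The main obstacle is the pressing-down preprocessing step: extracting a large subset on which the regressive part of $F$ is suitably uniform. This is where the hypothesis $\kappa<|S|$ is essentially used and where the technical depth of Hajnal's original argument lies, since without this preprocessing the $F^{-1}$-fibers can be uncontrollably large and the backward condition sabotages any naive recursion. The rest of the proof is cardinal-arithmetic bookkeeping inside a regular $\lambda$.
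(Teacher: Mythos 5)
The paper offers no proof of this lemma: it is invoked as a classical black box (Hajnal's free set theorem, which is a genuinely difficult result), so there is no argument of record to compare yours against. Judged on its own terms, your proposal has a real gap, located exactly where you place ``the main obstacle,'' and that obstacle is not a preprocessing technicality --- it is the entire content of the theorem. Your reductions are fine as far as they go: splitting $F(\alpha)$ into the regressive part $F'(\alpha)=F(\alpha)\cap\alpha$ and the forward part $F^+(\alpha)$, and killing the forward conflicts by a length-$\lambda$ recursion in a regular $\lambda$, is correct and standard. But the step that is supposed to tame $F'$ does not exist in the form you describe. Fodor's lemma stabilizes regressive \emph{ordinal-valued} functions; for regressive functions with values in $[\lambda]^{<\kappa}$ with $\kappa$ uncountable there is no ZFC stabilization of that kind. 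The generalized $\Delta$-system lemma needs hypotheses such as $|\alpha|^{<\kappa}<\lambda$ for all $\alpha<\lambda$, which fail for instance when $\lambda=\kappa^+$ and $\kappa^{<\kappa}>\kappa^+$, whereas Hajnal's theorem has no such hypothesis. Worse, even a perfect $\Delta$-system is not enough: if $F'(\alpha)=R\cup P_\alpha$ with the petals $P_\alpha$ pairwise disjoint and nonempty for $\alpha\in T$, the union of the petals has size $\lambda$ and nothing prevents it from covering $T$. Your fallback requirement --- that the non-root parts of $F'(\alpha)$ for $\alpha\in T$ avoid $T$ --- is verbatim the assertion that $T\setminus R$ is free for the regressive set mapping $F'$, i.e., it is the free set theorem for regressive mappings, to which you have just reduced the general case. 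So the argument is circular at its only hard point; Hajnal's actual proof of this step proceeds by transfinite induction on $\kappa$ with a ramification (tree) construction, not by pressing down.

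The singular-case reduction is also thinner than you suggest. Choosing free sets $D_i$ block by block and avoiding $\bigcup\{F(x):x\in D_j,\ j<i\}$ handles forward conflicts, but each $y$ in a later block may have $F(y)$ meeting earlier blocks, and the set of earlier points thereby disqualified can have full size; pruning these backward conflicts is again an instance of the original free-set problem rather than the removal of ``a small set of points.'' Since the lemma is classical and is used here only as a tool, the right move in the context of this paper is to cite it (e.g., Hajnal's set-mapping theorem, see Erd\H{o}s--Hajnal--M\'at\'e--Rado) rather than to sketch a proof whose central step is assumed.
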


\begin{theorem} \label{largediscrete}
Let $X$ be a space such that $\pi Nt(X) <d(X)$, $cf(\pi Nt(X)) \geq \aleph_1$ and $X$ contains a dense set of points of countable $\pi$-character. Then there is a discrete set $D \subset X$ such that $|D| \geq d(x)$.
\end{theorem}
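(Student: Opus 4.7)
The plan is to apply Hajnal's Free Set Lemma (Lemma~\ref{hajnal}) to a left-separated enumeration of points in the dense set $Y$ of countable $\pi$-character. Write $\kappa = \pi Nt(X)$ and $\lambda = d(X)$, fix a $\kappa^{op}$-like $\pi$-base $\mathcal{B}$ of $X$, and for each $y \in Y$ fix a countable local $\pi$-base $\{B_n(y) : n < \omega\} \subseteq \mathcal{B}$.

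First I would establish the following counting bound, which is the only place the cofinality hypothesis enters: for each $y \in Y$, the family
\[
\mathcal{A}(y) := \bigcup_{n<\omega}\{B \in \mathcal{B}: B_n(y) \subseteq B\}
\]
has cardinality strictly less than $\kappa$. Indeed each inner set is bounded above in $(\mathcal{B},\supseteq)$ by $B_n(y)$ and hence has size $<\kappa$, and the countable union stays below $\kappa$ because $\mathrm{cf}(\kappa)>\omega$. Since every $B \in \mathcal{B}$ containing $y$ must itself contain some $B_n(y)$ by countable $\pi$-character, this also yields $|\{B \in \mathcal{B}: y \in B\}| < \kappa$. Next, a standard recursion of length $\lambda$, exploiting that subsets of $X$ of cardinality less than $d(X)=\lambda$ fail to be dense, produces a left-separated sequence $\{y_\alpha : \alpha < \lambda\} \subseteq Y$ together with $\pi$-base witnesses $V_\alpha \in \{B_n(y_\alpha):n<\omega\}$ such that $V_\alpha \subseteq X \setminus \overline{\{y_\beta : \beta < \alpha\}}$.

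Finally I would apply Hajnal's Lemma to $\lambda$ with a set map of the form
\[
F(\alpha) = \{\beta < \lambda : y_\alpha \in V_\beta\} \cup \{\beta < \lambda : y_\beta \in V_\alpha\},
\]
arranged so that a free set $A \subseteq \lambda$ of size $\lambda$ produces a discrete subspace $\{y_\alpha : \alpha \in A\}$, with the countable local $\pi$-base at each $y_\alpha$ turning freeness plus left-separation into an actual separating neighbourhood. The first summand is bounded by $|\mathcal{A}(y_\alpha)|<\kappa$ after a preliminary pigeonhole refinement ensuring $\alpha \mapsto V_\alpha$ is essentially injective into $\mathcal{B}$, which is possible because $\kappa < \lambda$. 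The main technical obstacle is the second summand, since a single $\pi$-base element $V_\alpha$ may a priori contain up to $\lambda$ members of the sequence; I would handle this by a case analysis on the fibres of $\alpha \mapsto V_\alpha$. In the \emph{concentrated} case some $V^* \in \mathcal{B}$ is the witness for cofinally many indices, and left-separation then forces $V^*$ to be disjoint from the entire sequence $\{y_\alpha:\alpha<\lambda\}$, opening the door to a recursive argument inside the open subspace $V^*$; in the \emph{spread-out} case the naive bound $|F(\alpha)|<\kappa$ holds uniformly, Hajnal applies directly, and discreteness at each $y_\alpha$ is recovered from freeness by appealing to the full local $\pi$-base $\{B_n(y_\alpha)\}$ rather than to the single witness $V_\alpha$.
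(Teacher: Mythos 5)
Your opening moves coincide with the paper's: the counting bound $|\mathcal{A}(y)|<\kappa$ obtained from the $\kappa^{op}$-likeness of $\mathcal{B}$ together with the uncountable cofinality of $\kappa=\pi Nt(X)$, followed by an appeal to Hajnal's Free Set Lemma. But the argument breaks exactly where you flag ``the main technical obstacle,'' and the proposed repair does not work. The dichotomy on the fibres of $\alpha\mapsto V_\alpha$ says nothing about the size of $\{\beta<\lambda: y_\beta\in V_\alpha\}$: even when $\alpha\mapsto V_\alpha$ is injective (your ``spread-out'' case), a single open set $V_\alpha$ may contain $\lambda$ many points of the sequence, so the ``naive bound $|F(\alpha)|<\kappa$'' fails and Hajnal's lemma does not apply to your $F$. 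In the ``concentrated'' case, the conclusion that $V^*$ misses the whole sequence is not a contradiction (the sequence need not be dense), and the promised recursion inside $V^*$ is not supplied. There is also a smaller but genuine defect at the end: elements of a local $\pi$-base at $y_\alpha$ need not contain $y_\alpha$, so neither $V_\alpha$ nor the sets $B_n(y_\alpha)$ can serve as the separating neighbourhood that witnesses discreteness.

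The fix --- which is the paper's actual argument --- is to observe that your second summand is never needed: the conclusion of Hajnal's lemma is already symmetric ($x\notin F(y)$ \emph{and} $y\notin F(x)$), so one may feed it the asymmetric, provably small map. Concretely, choose for each $B\in\mathcal{B}$ a point $x_B\in B$ of countable $\pi$-character (so the prospective separating neighbourhood of $x_B$ is $B$ itself, which does contain $x_B$), let $S=\{x_B:B\in\mathcal{B}\}$, which is dense and hence of size at least $d(X)$, and set $F(x)=\{x_B\in S: U^x_n\subseteq B\ \text{for some}\ n<\omega\}$; this is exactly your $\mathcal{A}(y)$ computation and gives $|F(x)|<\kappa<|S|$. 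If $D$ is free for $F$ and $x\in D\cap B\setminus\{x_B\}$ for some $x_B\in D$, then countable $\pi$-character forces $U^x_n\subseteq B$ for some $n$, i.e.\ $x_B\in F(x)$, contradicting freeness; hence $B\cap D=\{x_B\}$ and $D$ is discrete. Your left-separated recursion is an unnecessary detour (that device belongs to the proof of Theorem~\ref{upperbound}, not to this one), and it is precisely the symmetrization of $F$ that manufactures the cardinality bound you cannot prove.
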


\begin{proof}
Let $\mathcal{B}$ be a $\pi$-base witnessing $\pi Nt(X) < d(X)$ and for every $B \in \mathcal{B}$ choose $x_B \in B$ such that $\pi \chi (x_B,X) \leq \aleph_0$. Let $S=\{x_B: B \in \mathcal{B} \}$. Note that $S$ is dense in $X$ so $|S| \geq d(X)$. For every $x \in S$, fix a countable local $\pi$-base $\{U^x_n: n < \omega \}$ at $x$ and set:

$$F(x)=\{x_B \in S: (\exists n<\omega)(U^x_n \subset B) \}$$

Then $|F(x)| <\pi Nt(X)$. Indeed, suppose by contradiction that $|F(x)| \geq \pi Nt(X)$. Since $\pi Nt(X)$ has uncountable cofinality, by the pigeonhole principle there would be $n<\omega$ and $\mathcal{S} \subset \mathcal{B}$ such that $|\mathcal{S}|=\pi Nt(X)$ and $U^n_x \subset B$ for every $B \in \mathcal{S}$. But that is a contradiction. So by Lemma $\ref{hajnal}$ there is a set $D \subset S$ such that $|D|=|S|$ and $D$ is free for $F$. We claim that $D$ is discrete in $X$. Indeed, let $x_B \in D$ , and suppose by contradiction that there is a point $x \in D \cap B \setminus \{x_B\}$. By $D \cap F(x) \subset \{x\}$ we have $x_B \notin F(x)$. So $U^x_n \nsubseteq B$ for every $n<\omega$, which implies $x \notin B$ and that is a contradiction.
\end{proof}

\begin{corollary}
Let $L$ be a $\kappa$-Suslin Line, where $\kappa<\aleph_\omega$. Then $\pi Nt(L)=\kappa^+$.
\end{corollary}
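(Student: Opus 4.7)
The plan is to combine the upper bound $\pi Nt(L) \leq \kappa^+$, which is already delivered by Theorem~\ref{upperbound}, with a contradiction argument driven by Theorem~\ref{largediscrete}. So I would assume $\pi Nt(L) \leq \kappa$ and aim to extract a pairwise disjoint family of $\kappa^+$ open intervals in $L$, contradicting $c(L) \leq \kappa$.

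First I would check that the three hypotheses of Theorem~\ref{largediscrete} hold for $X = L$. Since $L$ is $\kappa$-Suslin, $d(L) \geq \kappa^+ > \kappa \geq \pi Nt(L)$, giving $\pi Nt(L) < d(L)$. Lemma~\ref{lowerbound} yields $\pi Nt(L) \geq \aleph_1$, and here the hypothesis $\kappa < \aleph_\omega$ is essential: every infinite cardinal in the interval $[\aleph_1, \aleph_\omega)$ is a successor, hence regular, so $cf(\pi Nt(L)) \geq \aleph_1$. Lemma~\ref{continuous} supplies the dense set of points of countable $\pi$-character. Theorem~\ref{largediscrete} then produces a discrete subset $D \subseteq L$ with $|D| \geq d(L) \geq \kappa^+$.

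The remaining step, and where I expect the only real work, is to convert this discrete set into $\kappa^+$ pairwise disjoint open intervals. Since $L$ has at most one minimum and one maximum, by discarding at most two points I may assume every $x \in D$ is interior to $L$, so that $x$ has a basic open neighbourhood of the form $(a_x, b_x)$ with $a_x < x < b_x$ and $(a_x, b_x) \cap D = \{x\}$. Density of $L$ lets me pick $p_x \in (a_x, x)$, and I set $J_x = (p_x, x)$. Each $J_x$ is a non-empty open interval disjoint from $D$, since $J_x \subseteq (a_x, b_x) \setminus \{x\}$. For pairwise disjointness, suppose $x < y$ in $D$ and some $z$ belongs to $J_x \cap J_y$; then $p_y < z < x < y$ places $x$ inside $(p_y, y) \subseteq (a_y, b_y)$, contradicting $(a_y, b_y) \cap D = \{y\}$. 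Hence $\{J_x : x \in D\}$ is a pairwise disjoint family of $\kappa^+$ non-empty open intervals in $L$, violating $c(L) \leq \kappa$ and completing the proof.
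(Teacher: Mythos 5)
Your proof is correct and follows essentially the same route as the paper: upper bound from Theorem~\ref{upperbound}, regularity of $\pi Nt(L)$ from $\kappa<\aleph_\omega$, and Theorem~\ref{largediscrete} to produce a discrete set of size $\kappa^+$. The only difference is that you explicitly verify the final step (turning a large discrete set into a large cellular family in a linearly ordered space), which the paper simply cites as the known fact that spread and cellularity coincide for linearly ordered spaces; your verification of it is correct.
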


\begin{proof}
By Theorem $\ref{upperbound}$ and Lemma $\ref{lowerbound}$ we have that $\aleph_1 \leq \pi Nt(L) \leq \kappa^+ < \aleph_\omega$. Thus $cf(\pi Nt(L))$ is uncountable. If $\pi Nt(L) <\kappa^+=d(L)$, then by Theorem $\ref{largediscrete}$, $L$ would contain a discrete set of size $\kappa^+$. But that is a contradiction because the supremum of the sizes of the discrete sets coincides with the cellularity in linearly ordered spaces.
\end{proof}

\begin{question}
Is it true that if $\kappa$ is any cardinal and $L$ is a $\kappa$-Suslin Line then $\pi Nt(L)=\kappa^+$?
\end{question}

\section{Chang's Conjecture for $\aleph_\omega$ and the Noetherian type of box products}

We denote by $(X^\kappa, \tau_{<\mu})$ the topology on $X^\kappa$ generated by the $<\mu$-supported boxes, that is the sets of the form $\prod_{\alpha < \kappa} W_\alpha$, where $W_\alpha$ is open in $X$ and $|\{\alpha < \kappa: W_\alpha \neq X \}|<\mu$.

L. Soukup (\cite{S}, see also \cite{KMS}) proved that if Chang's Conjecture for $\aleph_\omega$ holds then $Nt((2^{\aleph_\omega}, \tau_{<\omega_1})) \geq \aleph_2$. We refine this theorem by replacing the two-point discrete space $2$ with a much more general class of spaces. 

For cardinals $\lambda, \kappa, \mu, \tau$, the statement $(\lambda, \kappa) \twoheadrightarrow (\mu, \tau)$ means that for every structure $(A,B, \dots)$ with a countable signature such that $|A|=\lambda$ and $|B|=\kappa$, there is an elementary substructure $(A',B', \dots) \prec (A,B, \dots)$ such that $|A'|=\mu$ and $|B'|=\tau$. Chang's Conjecture for $\aleph_\omega$ is the assertion that $(\aleph_{\omega+1}, \aleph_\omega) \twoheadrightarrow (\aleph_1, \aleph_0)$. The consistency of Chang's Conjecture for $\aleph_\omega$  with the GCH was proved in \cite{LMS} assuming the consistency of a cardinal slightly larger than a huge cardinal. 

In this and the following section we will make use of the following combinatorial tool, which was introduced in \cite{KMS}. 

\begin{definition}
\cite{KMS} A family $\mathcal{F} \subset [\kappa]^\omega$ is called $(\kappa, \mu)$-sparse if for every set $\mathcal{G} \subset \mathcal{F}$ such that $|\mathcal{G}| \geq \kappa$ we have $|\bigcup \mathcal{G}| \geq \mu$.
\end{definition}

\begin{lemma} \label{LemmaKMS} \cite{KMS} {\ \\}
\begin{enumerate}
\item If $\square_{\aleph_\omega}$ and $\aleph_\omega^\omega=\aleph_{\omega+1}$ hold then $([\aleph_\omega]^\omega, \subseteq)$ contains an $(\aleph_1, \aleph_1)$-sparse cofinal family.
\item (L. Soukup, \cite{S}) Under Chang's Conjecture for $\aleph_\omega$ there are no $(\aleph_1, \aleph_1)$-sparse cofinal families in $([\aleph_\omega]^\omega, \subseteq)$.
\item Under Martin's Maximum $([\aleph_\omega]^\omega, \subseteq)$ contains a $(\aleph_2, \aleph_1)$-sparse cofinal family.
\item $([\aleph_\omega]^\omega, \subseteq)$ contains a $(\aleph_4, \aleph_1)$-sparse cofinal family.
\end{enumerate}
\end{lemma}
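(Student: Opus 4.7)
The plan is to handle the four assertions separately, since each rests on a different combinatorial tool: parts (1), (3), (4) construct a sparse cofinal family, while part (2) is an obstruction theorem obtained through Chang's Conjecture.

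For the existence results, the common idea is to index a cofinal family by ordinals of some fixed cardinal $\theta$ (equal to $\aleph_{\omega+1}$, $\aleph_2$, or $\aleph_4$ in parts (1), (3), (4) respectively) so that if $\theta$-many members share a common countable upper bound then too many of them collapse into a single ``level'', which the construction forbids. In part (1) the scaffolding is provided by a $\square_{\aleph_\omega}$-sequence $\langle C_\alpha : \alpha < \aleph_{\omega+1}\rangle$ together with the enumeration of $[\aleph_\omega]^\omega$ in order type $\aleph_{\omega+1}$ afforded by $\aleph_\omega^\omega = \aleph_{\omega+1}$: for each $\alpha$ of countable cofinality one picks a cofinal $\omega$-sequence inside $C_\alpha$, closes the resulting family under finite modifications to secure cofinality in $([\aleph_\omega]^\omega, \subseteq)$, and uses the coherence of the $C_\alpha$'s to forbid $\aleph_1$ of these sequences from lying inside a common countable set. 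Parts (3) and (4) replace the square machinery with Shelah's pcf theory, a toolkit that I would import wholesale from \cite{KMS} rather than redo by hand.

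For part (2), I would code a hypothetical $(\aleph_1, \aleph_1)$-sparse cofinal family $\mathcal{F} = \{F_\alpha : \alpha < \aleph_{\omega+1}\}$ into a two-sorted structure on $(\aleph_{\omega+1}, \aleph_\omega)$ in a countable signature whose primitives include the membership relation ``$\beta \in F_\alpha$'' together with unary functions $f_n : \aleph_{\omega+1} \to \aleph_\omega$ enumerating each $F_\alpha$. Applying Chang's Conjecture for $\aleph_\omega$ produces an elementary substructure with $A$-part $A'$ of size $\aleph_1$ and $B$-part $B'$ of size $\aleph_0$; elementarity forces $f_n(\alpha) \in B'$ for every $\alpha \in A'$ and every $n < \omega$, so $F_\alpha \subseteq B'$ for all $\alpha \in A'$. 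This exhibits $\aleph_1$ many members of $\mathcal{F}$ whose union is countable, contradicting the sparseness of $\mathcal{F}$.

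The main obstacle is part (1): one must arrange the coherent cofinal sequences so that both cofinality (every countable $A \subset \aleph_\omega$ is absorbed) and sparseness (no $\aleph_1$ of the chosen sets sit inside a common countable superset) hold simultaneously. The coherence of $\square_{\aleph_\omega}$ is precisely what makes this balance possible, but extracting the right bookkeeping from the $C_\alpha$'s requires the careful stratification of $[\aleph_\omega]^\omega$ carried out in \cite{KMS}, and I would follow that template rather than reinvent the details.
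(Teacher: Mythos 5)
The paper does not actually prove this lemma: all four items are imported as a black box from \cite{KMS} (item (2) being credited to \cite{S}), so there is no internal argument to compare yours against, and your handling of parts (3) and (4) — pure citation — is exactly what the paper does. Your proof of part (2) is correct and self-contained, and it is essentially Soukup's original argument: any cofinal family in $([\aleph_\omega]^\omega,\subseteq)$ has at least $\aleph_{\omega+1}$ members (if it had size $\leq\aleph_\omega$ one writes it as $\bigcup_n\mathcal{F}_n$ with $|\mathcal{F}_n|\leq\aleph_n$ and diagonalizes), so the two-sorted structure with the enumerating functions $f_n$ is available; elementarity then traps $\aleph_1$ distinct members of the family inside the countable trace $B'$, contradicting $(\aleph_1,\aleph_1)$-sparseness.

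The one place where your sketch would not survive being written out is part (1). A $\square_{\aleph_\omega}$-sequence assigns to each limit $\alpha<\aleph_{\omega+1}$ a club $C_\alpha\subseteq\alpha$, so a cofinal $\omega$-sequence inside $C_\alpha$ is a countable subset of $\aleph_{\omega+1}$, not of $\aleph_\omega$: it cannot itself be a member of the desired family, and closing such sets under finite modifications will not yield anything cofinal in $[\aleph_\omega]^\omega$. The construction has to thread the square sequence through an enumeration $\langle A_\alpha:\alpha<\aleph_{\omega+1}\rangle$ of $[\aleph_\omega]^\omega$ (this is where $\aleph_\omega^\omega=\aleph_{\omega+1}$ enters), associating to each $\alpha$ of countable cofinality a set built from $\{A_\beta:\beta\in c_\alpha\}$ for a suitable $\omega$-sequence $c_\alpha$ cofinal in $C_\alpha$, with coherence exploited on the indices $\beta$ rather than on the clubs themselves. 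Since you explicitly defer to \cite{KMS} for the bookkeeping — which is all the paper does as well — this is an imprecision in a sketch rather than a fatal error, but as literally stated your recipe for (1) does not even produce a family of countable subsets of $\aleph_\omega$.
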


Given a family $\mathcal{U}$ of subsets of $X$, define $ord(x, \mathcal{U})=|\{U \in \mathcal{U}: x \in U \}|$.

\begin{lemma} \label{mainlemma}

Let $Y$ be a regular space and $X$ be a dense subspace of $(Y^{\aleph_\omega}, \tau_{<\omega_1})$. Let $\lambda$ be a cardinal such that $\lambda \leq \min \{cov(\aleph_\omega, \omega), |X|\}$. Let $\mathcal{U}$ be any base for $X$. Then there is a point $x \in X$ such that $ord(x, \mathcal{U}) \geq \lambda$.
\end{lemma}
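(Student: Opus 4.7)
The plan is to argue by contradiction: suppose $ord(x, \mathcal{U}) < \lambda$ for every $x \in X$, fix an arbitrary point $x \in X$, and extract from $\mathcal{U}_x := \{U \in \mathcal{U} : x \in U\}$ a cofinal family in $([\aleph_\omega]^\omega, \subseteq)$ of size strictly less than $\lambda \leq cov(\aleph_\omega, \omega)$, contradicting the very definition of $cov(\aleph_\omega, \omega)$.

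The cofinal family is built as follows. For each $U \in \mathcal{U}_x$, let $T(U) := \{\alpha < \aleph_\omega : \pi_\alpha(U) \text{ is not dense in } Y\}$. To see that $T(U)$ is countable, observe that the canonical box-base of $(Y^{\aleph_\omega}, \tau_{<\omega_1})$ restricts to a base of $X$, so there exists a basic box $B$ with countable support $s$ such that $x \in B$ and $B \cap X \subseteq U$. For $\alpha \notin s$ we have $\pi_\alpha(B) = Y$, and for any non-empty open $V \subseteq Y$ the set $B \cap \pi_\alpha^{-1}(V)$ is a non-empty basic box; by density of $X$ in $Y^{\aleph_\omega}$ it meets $X$, supplying a point of $U$ whose $\alpha$-coordinate lies in $V$. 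Hence $\pi_\alpha(U)$ is dense in $Y$ whenever $\alpha \notin s$, and $T(U) \subseteq s$ is countable.

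Cofinality goes in the opposite direction. Given any $t \in [\aleph_\omega]^\omega$, exploit regularity of $Y$ and $|Y| \geq 2$ to pick, for each $\alpha \in t$, an open $W_\alpha \ni x(\alpha)$ whose closure is a proper subset of $Y$; for $\alpha \notin t$ set $W_\alpha = Y$. The resulting basic box $W := \prod_\alpha W_\alpha$ is a neighborhood of $x$ in the box product, and the base property yields some $U_t \in \mathcal{U}_x$ with $U_t \subseteq W \cap X$. For every $\alpha \in t$ one has $\pi_\alpha(U_t) \subseteq W_\alpha \subseteq \overline{W_\alpha} \subsetneq Y$, so $\alpha \in T(U_t)$ and thus $t \subseteq T(U_t)$. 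The family $\{T(U) : U \in \mathcal{U}_x\}$ is therefore cofinal in $([\aleph_\omega]^\omega, \subseteq)$, which forces $|\mathcal{U}_x| \geq cov(\aleph_\omega, \omega) \geq \lambda$, contradicting the standing assumption.

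The main obstacle is to identify a useful surrogate for the \emph{support} of a base element $U$, which need not itself be a canonical basic box. The quantity $T(U)$ above is tailored to this role precisely because the two halves of the hypothesis on $Y$ pull in opposite directions: regularity of $Y$ produces arbitrarily restricting boxes that witness $t \subseteq T(U_t)$, while density of $X$ converts the freeness of coordinates in basic boxes into denseness of projections of $U$, keeping $T(U)$ countable. Once this definition is in place, the cofinality argument is essentially bookkeeping, and the bound $\lambda \leq |X|$ plays no role beyond guaranteeing that $X$ is non-empty so that some $x$ can be chosen.
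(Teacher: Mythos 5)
Your proof is correct, but it takes a genuinely different, and in fact more economical, route than the paper's. The paper argues by contradiction: it fixes an $(\aleph_4,\aleph_1)$-sparse cofinal family in $[\aleph_\omega]^\omega$ (a ZFC consequence of PCF theory), passes to a base of regular-open boxes so that $B'\cap X\subseteq B\cap X$ forces $supp(B)\subseteq supp(B')$, and then shows that if every point had order $<\lambda$, every subset of $X$ of size $<\lambda$ would be closed discrete, contradicting the fact that points would then have character $<\lambda$; this is exactly where the hypothesis $\lambda\le |X|$ and the regularity of $cov(\aleph_\omega,\omega)$ enter. Your surrogate support $T(U)=\{\alpha<\aleph_\omega:\pi_\alpha(U)\ \text{is not dense in}\ Y\}$ replaces the regular-open-box trick, and it lets you conclude directly that $\{T(U):U\in\mathcal{U}_x\}$ is a cofinal family of countable subsets of $\aleph_\omega$, hence that \emph{every} point satisfies $ord(x,\mathcal{U})=|\mathcal{U}_x|\ge cov(\aleph_\omega,\omega)\ge\lambda$. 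This conclusion is strictly stronger than the lemma, renders the hypothesis $\lambda\le|X|$ superfluous, and dispenses with the sparse family, the free-set considerations, and the left-separation argument; it would also slightly streamline the application in Theorem \ref{mainthm}, since any point of $Z^{\aleph_\omega}$ lying in the elementary submodel would serve as a witness. The one point requiring care --- which you address --- is that ``regular'' must be read as $T_3$ in the sense of Engelking, so that together with the paper's blanket assumption $|Y|\ge 2$ every point of $Y$ has an open neighbourhood whose closure is a proper subset of $Y$; without some such separation (e.g.\ for an indiscrete $Y$) the statement itself fails, so this is a hypothesis of the lemma rather than a gap in your argument.
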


\begin{proof}
Suppose by contradiction that $X$ has a base $\mathcal{U}$ such that $ord(x, \mathcal{U})<\lambda$ for every $x \in X$.

\noindent {\bf Claim 1:} If $x$ is not isolated, then there is a set $S \subset X \setminus \{x\}$ such that $|S| < \lambda$ and $x \in \overline{S}$.

\begin{proof}[Proof of Claim 1] Simply note that the character of every point in $X$ is strictly less than $\lambda$.
\renewcommand{\qedsymbol}{$\triangle$}
\end{proof}

Let $\mathcal{F}$ be an $(\aleph_4, \aleph_1)$-sparse cofinal family in $([\aleph_\omega]^\omega, \subseteq)$. Let $\mathcal{B}$ be the set of all countably supported boxes in $(Y^{\aleph_\omega}, \tau_{<\omega_1})$ and let $\mathcal{B}'$ be the following base: $$\mathcal{B}'= \{B \cap X:  supp (B) \in \mathcal{F} \textrm{ and } (\forall \alpha \in supp(B))(Int (\overline{\pi_{\alpha} (B)})=\pi_\alpha(B) \}.$$ $\mathcal{B}'$ is a base for the product because $X$ is regular. For $x \in X$, let $\mathcal{U}_x= \{U \in \mathcal{U}: x \in U \}$. Then $|\mathcal{U}_x|<\lambda$ for every $x \in X$. Define a local base $\mathcal{B}_x$ at $x$ in $X$ as follows:

$$\mathcal{B}_x =\{B \cap X: B \in \mathcal{B} \wedge (\exists U \in \mathcal{U})(x \in B \cap X \subset U)\}.$$ Given $\mathcal{A} \subset \mathcal{B}$ define $S(\mathcal{A})=\{supp(B): B \cap X \in \mathcal{A}\}$.

\vspace{.1in}

\noindent {\bf Claim 2:} $|S(\mathcal{B}_x)|<cov(\aleph_\omega, \omega)$, for every $x \in X$.

\begin{proof}[Proof of Claim 2] Suppose by contradiction that $|S(\mathcal{B}_x)| \geq cov(\aleph_\omega, \omega)$ for some $x \in X$. By regularity of $cov(\aleph_\omega, \omega)$ and $ord(x, \mathcal{U}) < \lambda \leq cov(\aleph_\omega, \omega)$ there would be a set $\mathcal{C} \subset \mathcal{B}_x$ such that $|S(\mathcal{C})| \geq cov(\aleph_\omega, \omega)$ and an open set $U \in \mathcal{U}$ such that $x \in B \cap X \subseteq U$ for every $B \in \mathcal{C}$. Now for every $B \in \mathcal{C}$ we can find $V_B \in \mathcal{U}$ such that $x \in V_B \subseteq B \cap X \subseteq U$. Since $ord(x, \mathcal{U})<\lambda$ there is $\mathcal{D} \subset \mathcal{C}$ and a fixed $V \in \mathcal{U}$ such that $|S(\mathcal{D})| \geq cov(\aleph_\omega, \omega)$ and $x \in V \subset B \cap X \subset U$. Let now $B'$ be a box such that $B' \cap X \in \mathcal{B}$ and $x \in B' \cap X \subset V \subset B \cap X \subset U$. By the definition of $\mathcal{B}$ and the density of $X$ we have that $B' \subset B$, and hence $supp(B) \subset supp(B')$ for every $B \in \mathcal{C}$, but this contradicts the fact that $\mathcal{F}$ is $(\aleph_4, \aleph_1)$-free.
\renewcommand{\qedsymbol}{$\triangle$}
\end{proof}

\noindent {\bf Claim 3:} Let $\{x_\alpha: \alpha < \kappa\}$ be an arbitrary enumeration of $X$ and fix $\beta < \lambda \leq \kappa$. If $\gamma \geq \beta$ then $x_\gamma \notin \overline{\{x_\alpha: \alpha < \beta \}}$.

\begin{proof}[Proof of Claim 3] From Claim 2 and the regularity of $cov(\aleph_\omega, \omega)$ it follows that 
$$\left |\bigcup_{\alpha < \beta} S(\mathcal{B}_{x_\alpha})\right |<cov(\aleph_\omega, \omega)$$

Hence there is $A \in [\aleph_\omega]^\omega$ such that $A$ is not covered by the support of any box $W$ such that $W \cap X \in \bigcup_{\alpha < \beta} \mathcal{B}_{x_\alpha}$. Let $G= B_1 \cap X \in \mathcal{B}$ be such that $x_\gamma \in G$ and $B_1$ is a box with $supp(B_1)=A$. Then there is $U \in \mathcal{U}$ such that $x_\gamma \in U \subset G$. For some $H=B_2 \cap X \in \mathcal{B}$ we have $x_\gamma \in H \subset U \subset G$. Now by the density of $X$ and the definition of $\mathcal{B}$ we have that $B_2 \subset B_1$ and hence $A=supp(B_1) \subset supp(B_2)$. Therefore $H \notin \bigcup_{\alpha < \beta} \mathcal{B}_{x_\alpha}$ and hence $H$ is an open neighbourhood of $x_\gamma$ in $X$ such that $H \cap \{x_\alpha: \alpha < \beta\} =\emptyset$.
\renewcommand{\qedsymbol}{$\triangle$}

\end{proof}

\noindent {\bf Claim 4:} Every subset of $X$ of size strictly less than $\lambda$ is closed discrete.

\begin{proof}[Proof of Claim 4] Let $F \subset X$ be such that $|F|<\lambda$ and let $\{x_\alpha: \alpha < \kappa \}$ be an enumeration of $X$ such that $F$ coincides with one of its initial segments.  Then by Claim 2, $F$ is closed. This proves that every $<\lambda$-sized subset of $X$ is closed. But then each subset of $F$ is closed and therefore $F$ is also discrete
\renewcommand{\qedsymbol}{$\triangle$}

\end{proof}

Comparing Claims 1 and 4 one gets a contradiction. Thus, there is $x \in X$ such that $ord(x, \mathcal{U}) \geq \lambda$.
\end{proof}

A point $x \in X$ is called a $P_{\omega_1}$-point if for every family $\mathcal{U}$ of $\leq \aleph_1$ many neighbourhoods of $x$, the set $\bigcap \mathcal{U}$ has non-empty interior. Every isolated point is of course a $P_{\omega_1}$-point.

\begin{theorem} \label{mainthm}
Assume Chang's Conjecture for $\aleph_\omega$. Let $Y$ be a regular space having a dense set of $P_{\omega_1}$-points. Let $X$ be the space $(Y^{\aleph_\omega}, \tau_{<\omega_1})$. Then $Nt(X) \geq \aleph_2$.
\end{theorem}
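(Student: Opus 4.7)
The plan is to establish that no base of $X$ is $\aleph_1^{op}$-like, i.e., for every base $\mathcal{B}$ of $X$ we exhibit $B_0 \in \mathcal{B}$ with $|\{B \in \mathcal{B}: B \supseteq B_0\}| \geq \aleph_1$. Since such a collection is bounded above in $(\mathcal{B}, \supseteq)$ by $B_0$ itself, this witnesses a bounded subset of cardinality $\aleph_1$ and hence forces $Nt(X) \geq \aleph_2$.

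Fix $D \subseteq Y$ a dense set of $P_{\omega_1}$-points and set $Z = D^{\aleph_\omega}$, which is dense in $X$. The trace $\mathcal{B}|_Z = \{B \cap Z : B \in \mathcal{B}\}$ is a base of $Z$, and since $|Z| \geq 2^{\aleph_\omega} \geq cov(\aleph_\omega, \omega)$, applying Lemma \ref{mainlemma} to $Z$ with $\lambda = cov(\aleph_\omega, \omega)$ yields $x \in Z$ with $ord(x, \mathcal{B}|_Z) \geq cov(\aleph_\omega, \omega)$. Selecting one representative $B \in \mathcal{B}$ for each distinct $B \cap Z$ that contains $x$ produces a family $\{B_\alpha : \alpha < cov(\aleph_\omega, \omega)\} \subseteq \mathcal{B}$ of distinct base elements, each containing $x$; crucially, every coordinate $x_i$ of $x$ is a $P_{\omega_1}$-point of $Y$.

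For each $\alpha$, choose a basic box $C_\alpha = \prod_{i \in S_\alpha} W^i_\alpha \times Y^{\aleph_\omega \setminus S_\alpha}$ with $x \in C_\alpha \subseteq B_\alpha$ and support $S_\alpha \in [\aleph_\omega]^\omega$. By shrinking $C_\alpha$ through intersection with an additional basic neighborhood of $x$ of prescribed support (enabled by $|Y| \geq 2$ and regularity of $Y$, which together guarantee proper open neighborhoods of each $x_i$), arrange that $\{S_\alpha : \alpha < cov(\aleph_\omega, \omega)\}$ is cofinal in $([\aleph_\omega]^\omega, \subseteq)$. By Chang's Conjecture together with Lemma \ref{LemmaKMS}(2), this cofinal family cannot be $(\aleph_1, \aleph_1)$-sparse, so there exists $A$ of cardinality $\aleph_1$ with $S := \bigcup_{\alpha \in A} S_\alpha$ countable. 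For each $i \in S$, $V_i := \bigcap\{W^i_\alpha : \alpha \in A,\ i \in S_\alpha\}$ is an intersection of at most $\aleph_1$ open neighborhoods of the $P_{\omega_1}$-point $x_i$, so $Int(V_i)$ is non-empty; pick any non-empty open $U_i \subseteq Int(V_i)$. The basic box $B^* = \prod_{i \in S} U_i \times Y^{\aleph_\omega \setminus S}$ has countable support $S$ and satisfies $B^* \subseteq C_\alpha \subseteq B_\alpha$ for every $\alpha \in A$. Picking any $B_0 \in \mathcal{B}$ with $B_0 \subseteq B^*$ completes the argument, since $\{B_\alpha : \alpha \in A\} \subseteq \{B \in \mathcal{B}: B \supseteq B_0\}$ has cardinality $\aleph_1$.

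The main technical hurdle is the cofinality arrangement of the supports $S_\alpha$, because Chang's Conjecture supplies the countable-union conclusion only for cofinal families in $[\aleph_\omega]^\omega$; it is precisely the freedom to enlarge each $S_\alpha$ while preserving $C_\alpha \subseteq B_\alpha$ that makes the compression step go through, and the density of $P_{\omega_1}$-points in $Y$ that then lets the countable-supported intersection have non-empty interior in $\tau_{<\omega_1}$.
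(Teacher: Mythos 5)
Your argument is correct, but it reaches the conclusion by a genuinely different mechanism than the paper's. The paper uses Chang's Conjecture directly: it builds an elementary submodel $M\prec H(\theta)$ with $|M\cap\aleph_{\omega+1}|=\aleph_1$ and $|M\cap\aleph_\omega|=\aleph_0$, reflects the high-order point given by Lemma \ref{mainlemma} into $M$, and then the $\aleph_1$ many boxes chosen inside $M$ automatically have supports contained in the single countable set $M\cap\aleph_\omega$, so the $P_{\omega_1}$-point intersection can be performed at once. You instead black-box the set theory through Lemma \ref{LemmaKMS}(2) (under CC no cofinal family in $([\aleph_\omega]^\omega,\subseteq)$ is $(\aleph_1,\aleph_1)$-sparse), which obliges you to first pad the supports $S_\alpha$ into a cofinal family --- a step the submodel argument never needs --- and you correctly flag this as the crux. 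The padding does go through: Lemma \ref{mainlemma} hands you $cov(\aleph_\omega,\omega)$ many distinct $B_\alpha$, which is exactly enough indices to dominate a fixed cofinal family $\{T_\alpha\}$, and shrinking $C_\alpha$ by a proper box of support $T_\alpha$ preserves $x\in C_\alpha\subseteq B_\alpha$ while enlarging the support to $S_\alpha\cup T_\alpha$. Two small points to make explicit: the existence of \emph{proper} open neighbourhoods of each coordinate $x_i$ really needs $Y$ to be $T_1$ rather than merely regular (an indiscrete space is regular), though the theorem implicitly assumes this much separation anyway; and when extracting $A$ from the failure of sparseness one should choose one index per distinct support so that $\{B_\alpha:\alpha\in A\}$ consists of $\aleph_1$ genuinely distinct members of $\mathcal{B}$, which is automatic since your $B_\alpha$ were chosen pairwise distinct. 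In exchange for the extra cofinality manoeuvre, your route isolates all the set theory in Lemma \ref{LemmaKMS}(2), whereas the paper's submodel obtains the countable union of supports for free.
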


\begin{proof}
Let $\mathcal{B}$ be a base for $X$ and $Z \subset Y$ be the set of all $P_{\omega_1}$-points. Use Chang's Conjecture for $\aleph_\omega$ to find, for large enough regular $\theta$, an elementary submodel $M \prec H(\theta)$ such that:

$$\{\aleph_\omega, \aleph_{\omega+1}, Y, Z, \mathcal{B}\} \subset M, |M \cap \aleph_{\omega+1}|=\aleph_1 \textrm{ and } |M \cap \aleph_\omega|=\aleph_0.$$

 Since $Z^{\aleph_\omega} \in M$ is dense in $X \in M$ with the induced topology and $\aleph_{\omega+1}\leq cov(\aleph_\omega, \omega)= \min \{|Z^{\aleph_\omega}|, cov(\aleph_\omega, \omega)\}$, by Lemma $\ref{mainlemma}$, we can fix a point $f \in Z^{\aleph_\omega} \cap M$ such that $\{B \in \mathcal{B}: f \in B \}$ has size at least $\aleph_{\omega+1}$. So let $F: \aleph_{\omega+1} \to I:=\{B \in \mathcal{B}: f \in B \}$ be a one-to-one function. By elementarity, we can assume that $F \in M$. Now $F \upharpoonright M$ is a one-to-one function between an uncountable set and $I \cap M$, which shows that $I \cap M$ is uncountable. Let $J \subset I \cap M$ be a set of size $\aleph_1$. For every $B \in J$ fix a box $W \in M$ such that $f \in W \subset B$. Let $\mathcal{W}$ be the set of all such boxes. Since $supp(W) \in M$ and $|supp(W)| \leq \aleph_0$, we have $supp(W) \subset M$ for every $W \in \mathcal{W}$. For every $\alpha \in M \cap \aleph_\omega$ let now $B_\alpha \subset \bigcap_{W \in \mathcal{W}} \pi_\alpha(W)$ be a non-empty open set. We can fix such a set because $\pi_\alpha(f) \in \pi_\alpha(W)$ for every $W \in \mathcal{W}$ and $\pi_\alpha(f)$ is a $P_{\omega_1}$-point. Let now $B$ be the countably supported box defined as $\pi_\alpha(B)=B_\alpha$ if and only if $\alpha \in M \cap \aleph_\omega$ and $\pi_\alpha(B)=Y$ otherwise. Then $B \subset W$ for every $W \in \mathcal{W}$, which proves that $\mathcal{B}$ is not $\omega_1^{op}$-like.
\end{proof}

Recall that a space is scattered if each of its subsets has an isolated point. Every scattered space has a dense set of isolated points, so Theorem $\ref{mainthm}$ has the following corollary.

\begin{corollary} \label{corscattered}
Assume Chang's Conjecture for $\aleph_\omega$. Let $X$ be a regular scattered space. Then $Nt((X^{\aleph_\omega}, \tau_{<\omega_1})) \geq \aleph_2$.
\end{corollary}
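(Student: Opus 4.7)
The plan is to apply Theorem \ref{mainthm} directly. The only thing one must verify is that a regular scattered space $X$ possesses a dense set of $P_{\omega_1}$-points, since then the hypotheses of Theorem \ref{mainthm} are met and the conclusion $Nt((X^{\aleph_\omega}, \tau_{<\omega_1})) \geq \aleph_2$ follows immediately. Observe that any isolated point $x$ of $X$ is trivially a $P_{\omega_1}$-point: for any family $\mathcal{U}$ of neighbourhoods of $x$, the set $\bigcap \mathcal{U}$ contains the open singleton $\{x\}$ and thus has non-empty interior. Hence it suffices to show that the isolated points of $X$ are dense.

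To establish that, I would argue by contradiction: suppose $I$ is the set of isolated points of $X$ and $\overline{I} \neq X$. Then $U := X \setminus \overline{I}$ is a non-empty open subset of $X$. As a subspace of the scattered space $X$, $U$ is itself scattered, so it contains a point $y$ which is isolated in $U$. This means there is an open set $V$ of $X$ with $V \cap U = \{y\}$; but $U$ is open in $X$, so $\{y\} = V \cap U$ is open in $X$, i.e.\ $y$ is isolated in $X$. This places $y \in I \cap (X \setminus \overline{I})$, a contradiction. Thus $I$ is dense.

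There is really no serious obstacle here: the work has already been done in Theorem \ref{mainthm}, and the only content of the corollary is the classical observation that scattered spaces have a dense set of isolated points, which in turn are automatically $P_{\omega_1}$-points. Consequently, the proof reduces to two lines: invoke the density of isolated points and apply Theorem \ref{mainthm} to $Z = I$.
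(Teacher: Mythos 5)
Your proposal is correct and follows exactly the paper's route: the paper likewise notes that every scattered space has a dense set of isolated points, that isolated points are $P_{\omega_1}$-points, and then applies Theorem \ref{mainthm}. You merely spell out the standard density argument that the paper leaves implicit.
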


\begin{corollary}
(L. Soukup, \cite{S}) $Nt((2^{\aleph_\omega}, \tau_{<\omega_1})) \geq \aleph_2$ under Chang's Conjecture for $\aleph_\omega$.
\end{corollary}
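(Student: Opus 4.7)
The plan is to derive this as an immediate specialization of the preceding Corollary \ref{corscattered}. I need only verify that the two-point discrete space $2$ satisfies the hypotheses of that corollary, namely that it is regular and scattered.

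First I would observe that $2$, being discrete, is trivially regular: every singleton is clopen, so any point and disjoint closed set can be separated by disjoint open sets. Next, $2$ is scattered because every nonempty subset (there are only finitely many: $\emptyset$, $\{0\}$, $\{1\}$, $\{0,1\}$) consists entirely of isolated points, hence certainly contains an isolated point. Thus $2$ is a regular scattered space.

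Applying Corollary \ref{corscattered} with $X = 2$ then yields $Nt((2^{\aleph_\omega}, \tau_{<\omega_1})) \geq \aleph_2$ under Chang's Conjecture for $\aleph_\omega$, which is exactly the claim. There is no substantive obstacle here; the work has already been carried out in Theorem \ref{mainthm} and Corollary \ref{corscattered}, and the present statement is just the historically original instance of that more general result.
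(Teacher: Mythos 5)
Your proposal is correct and matches the paper's intent exactly: the corollary is stated without proof precisely because it is the immediate specialization of Corollary \ref{corscattered} (equivalently, of Theorem \ref{mainthm}) to the regular scattered space $2$. Nothing further is needed.
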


In \cite{KMS} the authors provided a model of Chang's Conjecture for $\aleph_\omega$ where the Noetherian $\pi$-type of $(2^{\aleph_\omega}, \tau_{<\omega_1})$ is $\aleph_1$, so it is not clear whether the Noetherian $\pi$-type of $(2^{\aleph_\omega}, \tau_{<\omega_1})$ can be pushed above $\aleph_1$. However, the Noetherian $\pi$-type of the small $\sigma$-product of such a space is indeed influenced by Chang's conjecture.

Let $\sigma(2^{\aleph_\omega})=\{x \in 2^{\aleph_\omega}: |x^{-1}(1)|<\omega\}$.

\begin{theorem} \label{ChangPi}
Suppose that Chang's Conjecture for $\aleph_\omega$ holds. Let $X$ be the space $\sigma(2^{\aleph_\omega})$ with the topology induced by $(2^{\aleph_\omega}, \tau_{<\omega_1})$. Then $\pi Nt(X) \geq \aleph_2$.
\end{theorem}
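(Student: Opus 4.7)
The plan is to mimic the strategy of Theorem \ref{mainthm} but to replace the elementary-submodel step with a direct combinatorial appeal to Lemma \ref{LemmaKMS}(2). The key feature of $X=\sigma(2^{\aleph_\omega})$ that makes the $\pi$-base version work is that every point has finite $1$-support, so once the supports of basic neighborhoods inside the elements of a $\pi$-base have been squeezed into a countable set of coordinates, their ``finite parts'' lie in a countable pool.

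Let $\mathcal{B}$ be an arbitrary $\pi$-base of $X$. For each $B \in \mathcal{B}$ I would choose a basic open set $V_{A_B,F_B}:=\{G \in X : G \cap A_B=F_B\}\subset B$ (identifying a point of $X$ with its $1$-support), where $A_B \in [\aleph_\omega]^{\aleph_0}$ (padding with a fixed countably infinite set if the chosen support happened to be finite) and $F_B \subset A_B$ is finite. The family $\{A_B : B \in \mathcal{B}\}$ is cofinal in $([\aleph_\omega]^{\aleph_0},\subset)$: for each $A \in [\aleph_\omega]^{\aleph_0}$, the open set $V_{A,\emptyset}$ must contain some $B \in \mathcal{B}$, which forces $A \subset A_B$. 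Chang's Conjecture for $\aleph_\omega$ combined with Lemma \ref{LemmaKMS}(2) then implies that $\{A_B\}$ cannot be $(\aleph_1,\aleph_1)$-sparse, so there exists $\mathcal{G} \subset \{A_B\}$ with $|\mathcal{G}|\geq\aleph_1$ and $C:=\bigcup\mathcal{G}$ countable. Picking, for each $A \in \mathcal{G}$, one $B \in \mathcal{B}$ with $A_B=A$ yields $\mathcal{B}_0 \subset \mathcal{B}$ with $|\mathcal{B}_0|=\aleph_1$ and $A_B \subset C$ for every $B \in \mathcal{B}_0$.

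Since $[C]^{<\omega}$ is countable and $F_B \subset A_B \subset C$, a pigeonhole on $\mathcal{B}_0$ produces a finite $F^* \subset C$ and a subfamily $\mathcal{B}^* \subset \mathcal{B}_0$ of size $\aleph_1$ with $F_B=F^*$ for every $B \in \mathcal{B}^*$. Let $A^*:=\bigcup_{B \in \mathcal{B}^*}A_B \subset C$ (still countable) and $V^*:=V_{A^*,F^*}$. Then $V^*$ is a non-empty basic open ($F^* \in V^*$), and $V^* \subset V_{A_B,F_B}\subset B$ for every $B \in \mathcal{B}^*$: this is because $A_B \subset A^*$ and $F^* \subset A_B$ together imply that $G \cap A^*=F^*$ forces $G \cap A_B=F^* \cap A_B=F^*=F_B$. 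Since $\mathcal{B}$ is a $\pi$-base, there exists $B^* \in \mathcal{B}$ with $B^* \subset V^*$, and then $\{B \in \mathcal{B}:B^* \subset B\}\supset \mathcal{B}^*$ has size at least $\aleph_1$, showing that $\mathcal{B}$ is not $\omega_1^{op}$-like.

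The principal obstacle I anticipate is the reduction to the hypothesis of Lemma \ref{LemmaKMS}(2): one must arrange that the collection $\{A_B\}$, after padding, is a cofinal family in $[\aleph_\omega]^{\aleph_0}$ and that the resulting sparse-family witness can be pulled back to a subfamily of $\mathcal{B}$ of size $\aleph_1$. Once this bookkeeping is in place, the pigeonhole on the countable set $[C]^{<\omega}$ and the construction of the common refinement $V^*$ are routine. Observe that this final pigeonhole is exactly what fails for the full product $(2^{\aleph_\omega},\tau_{<\omega_1})$, consistent with the model from \cite{KMS} in which $\pi Nt((2^{\aleph_\omega},\tau_{<\omega_1}))=\aleph_1$ holds even under Chang's Conjecture.
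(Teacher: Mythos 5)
Your proof is correct, but it takes a genuinely different route from the paper's. The paper argues via cardinality that $\pi w(X)=cov(\aleph_\omega,\omega)\geq\aleph_{\omega+1}>|X|=\aleph_\omega$, so some point $x$ lies in $\aleph_{\omega+1}$ many members of any $\pi$-base, and then reruns the elementary submodel argument of Theorem \ref{mainthm}: Chang's Conjecture supplies $M\prec H(\theta)$ with $|M\cap\aleph_{\omega+1}|=\aleph_1$ and $|M\cap\aleph_\omega|=\aleph_0$, which traps $\aleph_1$ many of those $\pi$-base elements above a single countably supported box with support inside $M\cap\aleph_\omega$. You instead invoke Chang's Conjecture only through its combinatorial consequence, Lemma \ref{LemmaKMS}(2): you check that the (padded) supports $A_B$ of canonical basic sets $V_{A_B,F_B}$ refining a $\pi$-base form a cofinal family in $[\aleph_\omega]^\omega$ (the verification via $V_{A,\emptyset}$ and the point $F_B\cup\{\alpha\}$ is right), so $\aleph_1$ many of them fall into a single countable $C$, and then the crucial pigeonhole on the countable set $[C]^{<\omega}$ of possible finite parts $F_B$ --- available precisely because points of the $\sigma$-product have finite support --- yields $\aleph_1$ many members of $\mathcal{B}$ above the single nonempty set $V_{A^*,F^*}$. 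Both arguments are sound; yours is more self-contained modulo Soukup's lemma, isolates exactly where the $\sigma$-product structure enters, and your closing remark correctly identifies why the same pigeonhole fails for the full product $(2^{\aleph_\omega},\tau_{<\omega_1})$, consistently with the model of \cite{KMS}. The paper's version, on the other hand, reuses the machinery of Theorem \ref{mainthm} essentially unchanged and so adapts more readily to other dense subspaces of products of spaces with dense sets of $P_{\omega_1}$-points.
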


\begin{proof}
Note that $\pi w(X)=cov(\aleph_\omega, \omega) \geq \aleph_{\omega+1}$ while $|X|=\aleph_\omega$. Therefore, for every $\pi$-base $\mathcal{P}$ there is a point $x \in X$ such that $|\{P \in \mathcal{P}: x \in P\}| \geq \aleph_{\omega+1}$. Now, an argument similar to the proof of Theorem $\ref{mainthm}$ shows that $\pi Nt(X) \geq \aleph_2$.
\end{proof}

\begin{corollary} \label{closedsubset}
There are models of ZFC where the Noetherian $\pi$-type of $(2^{\aleph_\omega}, \tau_{<\omega_1})$ is $\aleph_1$, but $(2^{\aleph_\omega}, \tau_{<\omega_1})$ contains a closed subset of Noetherian $\pi$-type $>\aleph_1$.
\end{corollary}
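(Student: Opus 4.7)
My plan is to exhibit $\sigma(2^{\aleph_\omega})$ itself as the required closed subset, working in the model constructed in \cite{KMS} and mentioned in the paragraph preceding Theorem \ref{ChangPi}. The argument then splits naturally into a brief topological observation plus a direct appeal to Theorem \ref{ChangPi}.

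First I would verify that $\sigma(2^{\aleph_\omega})$ is closed in $(2^{\aleph_\omega}, \tau_{<\omega_1})$. Let $x \in 2^{\aleph_\omega}$ with $|x^{-1}(1)| \geq \aleph_0$ and fix a countably infinite subset $A \subseteq x^{-1}(1)$. Then
$U = \{y \in 2^{\aleph_\omega} : y \upharpoonright A \equiv 1\}$
is a countably supported box, hence a $\tau_{<\omega_1}$-open neighbourhood of $x$. Since every $y \in U$ satisfies $|y^{-1}(1)| \geq |A| = \aleph_0$, we have $U \cap \sigma(2^{\aleph_\omega}) = \emptyset$, and the complement of $\sigma(2^{\aleph_\omega})$ is therefore open.

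Second, I would invoke the model from \cite{KMS} in which Chang's Conjecture for $\aleph_\omega$ holds together with $\pi Nt((2^{\aleph_\omega}, \tau_{<\omega_1})) = \aleph_1$. In that model Theorem \ref{ChangPi} applies and yields $\pi Nt(\sigma(2^{\aleph_\omega})) \geq \aleph_2$. Combined with the closedness established in the previous paragraph, this exhibits $\sigma(2^{\aleph_\omega})$ as a closed subspace of $(2^{\aleph_\omega}, \tau_{<\omega_1})$ whose Noetherian $\pi$-type strictly exceeds that of the ambient space.

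I do not foresee any real obstacle, but it is worth emphasizing that the closedness of $\sigma(2^{\aleph_\omega})$ relies essentially on the box topology $\tau_{<\omega_1}$: in the ordinary Tychonoff product $\sigma(2^{\aleph_\omega})$ is dense, so the whole construction is tied to the fact that basic open sets are allowed countable support.
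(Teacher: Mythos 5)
Your proposal is correct and follows essentially the same route as the paper: cite the Cohen-real model of \cite{KMS} in which Chang's Conjecture for $\aleph_\omega$ holds while $\pi Nt((2^{\aleph_\omega}, \tau_{<\omega_1}))=\aleph_1$, and apply Theorem \ref{ChangPi} to $\sigma(2^{\aleph_\omega})$ as a closed subspace. The only difference is that you spell out the (correct) verification that $\sigma(2^{\aleph_\omega})$ is closed in the $\tau_{<\omega_1}$ topology, which the paper merely asserts.
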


\begin{proof}
In Theorem 3.8 of \cite{KMS} the authors proved that adding $\aleph_{\omega+1}$ Cohen reals to any model of Chang's Conjecture for $\aleph_\omega$ one gets a model where $\pi Nt((2^{\aleph_\omega}, \tau_{<\omega_1}))=\aleph_1$ and Chang's Conjecture for $\aleph_\omega$ still holds. By Theorem $\ref{ChangPi}$ $\pi Nt(\sigma(2^{\aleph_\omega}))=\aleph_2$ in that model and $\sigma(2^{\aleph_\omega})$ embeds as a closed subset into $(2^{\aleph_\omega}, \tau_{<\omega_1})$.
\end{proof}

Note that in the model of Corollary $\ref{closedsubset}$ the Noetherian type of $(2^{\aleph_\omega}, \tau_{<\omega_1})$ is $\aleph_2$.
However, to get a model of ZFC where the Noetherian type of $(2^{\aleph_\omega}, \tau_{<\omega_1})$ is $\aleph_1$ but $(2^{\aleph_\omega}, \tau_{<\omega_1})$ contains a closed subset of Noetherian type $>\aleph_1$ it suffices to take any model of $\square_{\aleph_\omega}$ and $\aleph_\omega^\omega=\aleph_{\omega+1}$.
Indeed, $(2^{\aleph_\omega}, \tau_{<\omega_1})$ has Noetherian type $\aleph_1$ in that model by Lemma $\ref{LemmaKMS}$ and Corollary $\ref{corequiv}$ and it contains a closed copy of the one-point Lindel\"ofication of a discrete set of size $\aleph_2$, which has Noetherian type $\aleph_3$ according to Theorem $\ref{lindelofication}$ below.

\section{On the Noetherian type of Pixley-Roy Hyperspaces}

It can be proved that $Nt(Exp(X)) \leq Nt(X)$, where $Exp(X)$ is the set of all finite subsets of $X$ with the Vietoris topology. This was essentially proved by Peregudov and Shapirovskii in \cite{PS}. We don't know, at the moment, of any space such that $Nt(Exp(X)) <Nt(X)$.
 
Let $(X, \tau)$ be a space and $PR(X)=[X]^{<\omega}$. We define a topology on $PR(X)$ by declaring the set $\{[F,U]: F \in PR(X), U \in \tau \}$ to be a base, where $[F,U]=\{G \in PR(X): F \subset G \subset U \}$. This is a well-known hyperspace construction called the \emph{Pixley-Roy topology on $X$} (see \cite{vD}). Hajnal and Juh\'asz remarked in \cite{HJ} that all cardinal functions on $PR(X)$ are easy to compute in terms of their values on $X$, except for the cellularity. We think that Noetherian type might be one more exception. 

We tested the following conjecture on several examples of spaces.

\begin{conjecture}
$Nt(PR(X)) \leq Nt(X)$.
\end{conjecture}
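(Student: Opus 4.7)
The plan is to build a $\kappa^{op}$-like base for $PR(X)$ from a $\kappa^{op}$-like base $\mathcal{B}$ of $X$, where $\kappa=Nt(X)$. The cleanest candidate is
\[
\mathcal{B}'=\{[F,U]:F\in PR(X),\ U\in\mathcal{B},\ F\subseteq U\}.
\]
This is $\kappa^{op}$-like essentially for free: if $[F^*,U^*]\subseteq[F',U']$ in $\mathcal{B}'$ then $F'\subseteq F^*$ and $U'\supseteq U^*$, so there are at most $2^{|F^*|}$ choices for $F'$ and, by $\kappa^{op}$-likeness of $\mathcal{B}$ applied to $U^*$, fewer than $\kappa$ choices for $U'$; this gives fewer than $\kappa$ supersets in total for infinite $\kappa$. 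The defect is that $\mathcal{B}'$ need not be a base for $PR(X)$, since a finite set $H$ lying in an open set $V$ may fail to be contained in any single element of $\mathcal{B}$ lying inside $V$.

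The natural remedy is to allow unions, setting
\[
\mathcal{B}''=\Bigl\{\bigl[F,\textstyle\bigcup_{x\in F}\phi(x)\bigr]:F\in PR(X),\ \phi:F\to\mathcal{B},\ x\in\phi(x)\text{ for each }x\in F\Bigr\}.
\]
Choosing a base element at each point of $H$ inside $V$ shows this is a base. The main obstacle now is $\kappa^{op}$-likeness: an inclusion $[F^*,U^*]\subseteq[F',U']$ in $\mathcal{B}''$ requires $\bigcup_{y\in F'}\phi'(y)\supseteq\bigcup_{x\in F^*}\phi^*(x)$, and this union-wise containment does not decompose into individual inclusions $\phi'(y)\supseteq\phi^*(x)$ to which one could apply the $\kappa^{op}$-likeness of $\mathcal{B}$. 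Worse, closure of $\mathcal{B}$ under finite unions does not in general preserve $\kappa^{op}$-likeness, so one cannot simply pass to a finite-union closure of $\mathcal{B}$ and invoke the first attempt.

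My strategy for the main obstacle would be to refine $\mathcal{B}''$ so as to force each $\phi^*(x)$ to sit inside a single $\phi'(y)$ for some $y=y(x)\in F'$. For a regular space $X$, one natural attempt is to require each $\phi(x)$ to be small enough that its closure avoids $F\setminus\{x\}$, combined with a minimality assumption on the choice $\phi$; under such a refinement, the number of admissible $[F',U']$ is bounded by $2^{|F^*|}\cdot\prod_{x\in F^*}|\{B\in\mathcal{B}:B\supseteq\phi^*(x)\}|$, which is strictly less than $\kappa$ when $\kappa$ is infinite and regular. The hardest part will be to simultaneously secure both that the refined family is still a base for $PR(X)$ and that it enforces the single-cover property; I expect a genuine resolution of the conjecture to require a new combinatorial ingredient for handling finite unions inside $\kappa^{op}$-like families, perhaps via an elementary-submodel selection in the spirit of Theorem~\ref{mainthm}, and possibly a separate treatment of singular $\kappa$.
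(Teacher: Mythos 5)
You have not proved the statement, and you should be aware that the paper does not prove it either: it is stated explicitly as a \emph{conjecture}, supported only by examples, and the section that follows (Theorem~\ref{pixleytheorem} and its corollaries) merely verifies it for one-point Lindel\"ofications $A_{\omega_1}(\mu)$ --- where, notably, the value of $Nt(PR(X))$ is governed by the existence of $(\kappa,\aleph_1)$-sparse cofinal families in $([\mu]^\omega,\subseteq)$, a PCF-theoretic invariant. That the answer in the one known nontrivial case depends on such set-theoretic data is already strong evidence that no purely ``soft'' construction of the kind you attempt can settle the general conjecture.

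Your own diagnosis of where the argument breaks is accurate, so let me just make the gap concrete. The family $\mathcal{B}'$ is $\kappa^{op}$-like but not a base; the family $\mathcal{B}''$ is a base but its $\kappa^{op}$-likeness is precisely the unresolved point, and your proposed repair (requiring $\overline{\phi(x)}\cap(F\setminus\{x\})=\emptyset$ plus ``minimality'') is never shown to yield either a base or the single-cover property. The single-cover property genuinely fails: in $2^{\omega_1}$ with its standard $\aleph_0^{op}$-like base of basic clopen boxes, the whole space is covered by the two-element union $[x_\alpha=0]\cup[x_\alpha=1]$ for each of the $\aleph_1$ coordinates $\alpha$, so a fixed basic box acquires $\aleph_1$ many supersets the moment you close under unions of pairs, and no individual factor of the covering union need contain the covered set. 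Your proposed cardinality bound $2^{|F^*|}\cdot\prod_{x\in F^*}|\{B\in\mathcal{B}:B\supseteq\phi^*(x)\}|$ is therefore simply not a bound on the number of supersets of $[F^*,\bigcup_{x\in F^*}\phi^*(x)]$ in $\mathcal{B}''$. (As a minor point, the regularity of $\kappa$ is irrelevant there: a finite product of cardinals below an infinite $\kappa$ is always below $\kappa$.) Since you end by conceding that a new combinatorial ingredient is needed, the honest conclusion is that the conjecture remains open, exactly as the paper leaves it.
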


Our aim in this section is to show that the gap between $Nt(PR(X))$ and $Nt(X)$ can be as large as the gap between $\aleph_4$ and $\aleph_\alpha$ for countable $\alpha$.

Given cardinals $\kappa$ and $\mu$, we denote by $A_\mu(\kappa)$ the topology on $\kappa \cup \{p\}$ such that every point of $\kappa$ is isolated and a neighbourhood of $p$ is a set of the form $\{p\} \cup \kappa \setminus C$, where $|C|<\mu$.

\begin{theorem} \label{lindelofication}
Let $\kappa$ and $\mu$ be cardinals such that $cf(cov(\kappa, <\mu))>\mu$. Then $Nt(A_\mu(\kappa))=cov(\kappa,<\mu)^+$.

\end{theorem}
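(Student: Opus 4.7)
Write $\lambda := cov(\kappa, <\mu)$. The plan is to prove the two inequalities $Nt(A_\mu(\kappa)) \leq \lambda^+$ and $Nt(A_\mu(\kappa)) > \lambda$ separately; the hypothesis $cf(\lambda) > \mu$ enters only in the lower bound.

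For the upper bound, I would fix a cofinal family $\mathcal{F} \subseteq [\kappa]^{<\mu}$ of size $\lambda$ and define
$$\mathcal{B} = \{\{\alpha\}: \alpha \in \kappa\} \cup \{\{p\} \cup (\kappa \setminus C): C \in \mathcal{F}\},$$
which is a base for $A_\mu(\kappa)$ by cofinality of $\mathcal{F}$. Above $B = \{\alpha\}$ in $(\mathcal{B}, \supseteq)$ one finds at most $|\mathcal{F}| + 1 = \lambda$ elements, namely $\{\alpha\}$ itself together with the $p$-neighbourhoods $\{p\} \cup (\kappa \setminus C)$ for which $\alpha \notin C$. Above $B = \{p\} \cup (\kappa \setminus C)$, any larger base element must also contain $p$, so is of the form $\{p\} \cup (\kappa \setminus C')$ with $C' \subseteq C$ and $C' \in \mathcal{F}$, again at most $\lambda$ elements. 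Thus $\mathcal{B}$ is $(\lambda^+)^{op}$-like.

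For the lower bound, I would argue by contradiction: suppose $\mathcal{B}$ is a $\lambda^{op}$-like base of $A_\mu(\kappa)$. Every $\{\alpha\}$ must belong to $\mathcal{B}$ because $\alpha$ is isolated. Let $\mathcal{N} = \{B \in \mathcal{B}: p \in B\}$ and, for $B \in \mathcal{N}$, set $C_B = \kappa \setminus B \in [\kappa]^{<\mu}$. Since $\mathcal{N}$ is a local base at $p$, the map $B \mapsto C_B$ injects $\mathcal{N}$ into $[\kappa]^{<\mu}$ with cofinal image, so $|\mathcal{N}| \geq \lambda$. The $\lambda^{op}$-like condition applied to $\{\alpha\}$ then forces
$$S_\alpha := \{B \in \mathcal{N}: \alpha \in B\}$$
to have size $<\lambda$ for every $\alpha \in \kappa$, since each $B \in S_\alpha$ lies above $\{\alpha\}$ in $\mathcal{B}$.

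The heart of the argument --- and the sole place where $cf(\lambda) > \mu$ is used --- is a pigeonhole step. I would pick $\mu$ distinct elements $\{\alpha_\xi: \xi < \mu\} \subseteq \kappa$, which is possible because $\mu \leq \kappa$; otherwise $\lambda = 1$ and the cofinality hypothesis already fails. The union $\bigcup_{\xi<\mu} S_{\alpha_\xi}$ is a union of $\mu < cf(\lambda)$ sets each of size $<\lambda$, hence of size $<\lambda$. Since $|\mathcal{N}| \geq \lambda$, there is some $B \in \mathcal{N} \setminus \bigcup_{\xi<\mu} S_{\alpha_\xi}$; but then $\alpha_\xi \in C_B$ for all $\xi < \mu$, forcing $|C_B| \geq \mu$ in contradiction with $C_B \in [\kappa]^{<\mu}$. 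The main obstacle is really just to spot this pigeonhole; once it is set up the bookkeeping is routine.
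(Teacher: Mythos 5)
Your proof is correct and follows essentially the same route as the paper: the upper bound comes from a base of size $cov(\kappa,<\mu)$ (the paper just cites $Nt(X)\leq w(X)^+$ where you build the base explicitly), and the lower bound is the same pigeonhole, using $cf(\lambda)>\mu$ to find one $p$-neighbourhood in the putative $\lambda^{op}$-like base whose complement must swallow $\mu$ many isolated points. Your phrasing of the pigeonhole (taking a union of $\mu$ many sets of size $<\lambda$ rather than suprema of indices $\beta_\alpha$) is in fact slightly cleaner, since it does not implicitly assume the exceptional index sets are bounded in $\lambda$.
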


\begin{proof}
Note that $Nt(A_\mu(\kappa)) \leq (cov(\kappa, <\mu))^+$ since $w(A_\mu(\kappa))=cov(\kappa, <\mu)$ and the inequality $Nt(X) \leq w(X)^+$ is true for every space $X$.

For the reverse inequality let $\lambda =cov(\kappa, <\mu)$ and suppose by contradiction that $X$ has a $\lambda^{op}$-like base $\mathcal{B}$. For every $B \in \mathcal{B}$ such that $p \in B$ let $C_B \in [\kappa]^{<\mu}$ be such that $\kappa \setminus C_B \subset B$. Let $\mathcal{C}$ be the collection of all these $C_B$'s. By refining $\mathcal{C}$ we can assume it has size $cov(\kappa, <\mu)$. If we could find $\lambda$ many elements of $\mathcal{C}$ which miss some ordinal $\gamma$ then the isolated point $\gamma$ would have $\lambda$ many distinct supersets in $\mathcal{B}$, thus contradicting the fact that $\mathcal{B}$ is $\lambda^{op}$-like. Now, since $cf(cov(\kappa, <\mu))>\mu$, then for every $\alpha < \mu$ we can find $\beta_\alpha < \lambda$ such that $\alpha \in C_\gamma$ for every $\gamma \leq \beta_\alpha$. Let $\beta=\sup_{\alpha<\mu} \beta_\alpha<\lambda$. Then $\mu \subset C_{\beta+1}$ thus contradicting the fact that $C_{\beta+1}$ has size smaller than $\mu$.
\end{proof}

\begin{corollary}
Let $X$ be the one-point lindel\"ofication of a discrete space of size $\aleph_\omega$. Then $Nt(X)=cov(\aleph_\omega, \omega)^+$.
\end{corollary}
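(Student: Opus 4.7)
The plan is to reduce the statement directly to Theorem \ref{lindelofication}. First, I would observe that the one-point Lindel\"ofication of a discrete space of size $\aleph_\omega$ is precisely the space $A_{\omega_1}(\aleph_\omega)$ defined just before that theorem: in a discrete space the Lindel\"of subsets are exactly the at most countable ones, so the basic neighbourhoods of the point at infinity are exactly the sets of the form $\{p\} \cup (\aleph_\omega \setminus C)$ with $|C| \leq \aleph_0$, i.e., $|C| < \omega_1$, matching the definition of $A_{\omega_1}(\aleph_\omega)$.

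Next, I would apply Theorem \ref{lindelofication} with $\kappa = \aleph_\omega$ and $\mu = \omega_1$. The hypothesis to verify is $cf(cov(\aleph_\omega, <\omega_1)) > \omega_1$. Since $cov(\aleph_\omega, <\omega_1)$ is just $cov(\aleph_\omega, \omega)$ in the notation used in the corollary, and since $cov(\aleph_\omega, \omega) \geq \aleph_{\omega+1}$, it suffices to recall the standard pcf-theoretic fact that this cardinal equals $\max \mathrm{pcf}\{\aleph_n : 1 \leq n < \omega\}$ and is therefore regular. Hence its cofinality equals itself, which is certainly strictly larger than $\omega_1$. Theorem \ref{lindelofication} then yields exactly $Nt(X) = cov(\aleph_\omega, \omega)^+$.

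The only step that requires any nontrivial input is the regularity of $cov(\aleph_\omega, \omega)$, which I would simply cite from Shelah's pcf theory rather than reprove; everything else is a direct unpacking of the definition of $A_{\omega_1}(\aleph_\omega)$ and an invocation of the previous theorem.
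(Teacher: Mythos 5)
Your proposal is correct and follows exactly the route the paper intends: the corollary is stated as an immediate instance of Theorem \ref{lindelofication} with $\kappa=\aleph_\omega$, $\mu=\omega_1$, and $A_{\omega_1}(\aleph_\omega)$ identified with the one-point Lindel\"ofication. The only hypothesis needing verification, $cf(cov(\aleph_\omega,<\omega_1))>\omega_1$, is left implicit in the paper, and your justification via the regularity of $cov(\aleph_\omega,\omega)=\max\mathrm{pcf}\{\aleph_n: 1\leq n<\omega\}$ is the standard and correct way to supply it.
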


\begin{theorem} \label{pixleytheorem}
Let $X$ be the one-point lindel\"ofication of a discrete space of size $\mu$. Then $Nt(PR(X)) \leq \kappa$ if and only if there is a $(\kappa, \aleph_1)$-sparse cofinal family in $([\mu]^\omega, \subseteq)$.
\end{theorem}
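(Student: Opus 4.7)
The plan is to prove the biconditional via two symmetric constructions, both concentrating on the single non-isolated point $p$ of $X = A_{\omega_1}(\mu)$. Writing $U_C = \{p\} \cup (\mu \setminus C)$ for $C \in [\mu]^\omega$, a local base at the point $F_0 \cup \{p\}$ of $PR(X)$ (with $F_0 \in [\mu]^{<\omega}$) consists of the basic opens $[F_0 \cup \{p\}, U_C]$ with $C \cap F_0 = \emptyset$, while every $F \in [\mu]^{<\omega}$ is isolated in $PR(X)$ and so $\{F\}$ must appear in every base. The $\kappa^{op}$-like condition on a base of $PR(X)$ is therefore governed by the interaction between the finite supports $F_0$ and the countable holes $C$, which is exactly what sparseness controls.

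For the forward direction I would fix a $\kappa^{op}$-like base $\mathcal{B}$ of $PR(X)$. For each $B \in \mathcal{B}$ with $\{p\} \in B$, choose a countable $A_B \subseteq \mu$ such that $[\{p\}, U_{A_B}] \subseteq B$ (which exists because $B$ is open around the point $\{p\}$) and set $\mathcal{F} = \{A_B : B \in \mathcal{B},\ \{p\} \in B\}$. Cofinality of $\mathcal{F}$ is almost immediate: given any countable $C \subseteq \mu$, pick $B \in \mathcal{B}$ with $\{p\} \in B \subseteq [\{p\}, U_C]$; then for each $x \notin A_B$, the point $\{p,x\} \in B \subseteq [\{p\}, U_C]$ forces $x \notin C$, so $C \subseteq A_B$. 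For sparseness, suppose $\mathcal{G} \subseteq \mathcal{F}$ has size $\geq \kappa$ and $C^* := \bigcup \mathcal{G}$ is countable; every $B$ with $A_B \in \mathcal{G}$ then satisfies $[\{p\}, U_{C^*}] \subseteq [\{p\}, U_{A_B}] \subseteq B$. Choosing any $B^* \in \mathcal{B}$ with $\{p\} \in B^* \subseteq [\{p\}, U_{C^*}]$ produces at least $\kappa$ distinct elements of $\mathcal{B}$ above $B^*$, contradicting $\kappa^{op}$-likeness.

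For the reverse direction I would begin with a $(\kappa, \aleph_1)$-sparse cofinal family $\mathcal{F} \subseteq [\mu]^\omega$ and build the base
$$\mathcal{B} = \{\{F\} : F \in [\mu]^{<\omega}\} \cup \{[F_0 \cup \{p\}, U_{A \setminus F_0}] : F_0 \in [\mu]^{<\omega},\ A \in \mathcal{F}\}.$$
That $\mathcal{B}$ is a base of $PR(X)$ is a straightforward check: cofinality of $\mathcal{F}$ lets one absorb the countable witness appearing in any basic open set into an element of $\mathcal{F}$. The $\kappa^{op}$-like verification then reduces to a combinatorial computation at a fixed $B^* = [F_0 \cup \{p\}, U_{A \setminus F_0}]$: any $B = [F_1 \cup \{p\}, U_{A' \setminus F_1}] \in \mathcal{B}$ with $B^* \subseteq B$ is forced to satisfy $F_1 \subseteq F_0$ (by evaluating the containment on the point $F_0 \cup \{p\}$) and $A' \subseteq A \cup F_0$ (by evaluating it on the points $F_0 \cup \{p,x\}$ for $x \in \mu \setminus (A \cup F_0)$). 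Since there are only finitely many admissible $F_1$, and since $(\kappa, \aleph_1)$-sparseness bounds $|\{A' \in \mathcal{F} : A' \subseteq A \cup F_0\}| < \kappa$ once one observes that $A \cup F_0$ is countable, the total count of base elements above $B^*$ is strictly less than $\kappa$. The main obstacle, and what justifies the somewhat peculiar choice to index the base by sets of the form $A \setminus F_0$ with $A \in \mathcal{F}$, is extracting the clean inclusion $A' \subseteq A \cup F_0$ from the set-theoretic containment of hyperspace opens; without this indexing the combinatorics would not align with the sparseness hypothesis.
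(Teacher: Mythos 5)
Your proof is correct and follows essentially the same route as the paper's: the base you build from a sparse cofinal family (indexed by pairs $F_0$, $A$ with the hole $A\setminus F_0$) is exactly the paper's $\mathcal{B}_F=\{[F,\mu\setminus C]:C\in\mathcal{C}_F\}$, and your extraction of a sparse family from a $\kappa^{op}$-like base is the paper's argument specialized to the single point $\{p\}$ (the paper works at an arbitrary $F\ni p$ and takes $\{C\cup F\}$, which is why your version reads slightly cleaner). The only cosmetic point is that the chosen witnesses $A_B$ should be taken countably \emph{infinite} so that $\mathcal{F}\subseteq[\mu]^\omega$, which costs nothing.
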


\begin{proof}
Let $p$ be the unique non-isolated point of $X$ and note that if $F \in [X]^{<\omega}$ and $p \notin F$ then $F$ is an isolated point in the Pixley-Roy topology. Suppose that a $(\kappa, \aleph_1)$-sparse family $\mathcal{C}$ exists in $([\mu]^\omega, \subseteq)$. For every $F$ such that $p \in F$, let $\mathcal{C}_F=\{C \setminus F: C \in \mathcal{C} \}$. Then $\mathcal{C}_F$ is a $(\kappa, \aleph_1)$-sparse family covering every countable subset of $X$ missing $F$ and so $\mathcal{B}_F=\{[F, \mu \setminus C]: C \in \mathcal{C}_F \}$ is a local base at $F$. Let $\mathcal{B}=\bigcup \{\mathcal{B}_F: p \in F \} \cup \{\{F\}: p \notin F \}$. We claim that $\mathcal{B}$ is a $\kappa^{op}$-like base for $PR(X)$. Indeed, suppose by contradiction that $[F, \mu \setminus C] \subset [F_\alpha, \mu \setminus C_\alpha]$ for every $\alpha < \kappa$ and some family $\{C_\alpha: \alpha < \kappa \} \subset \mathcal{C}$. We have that $F_\alpha \subset F$ for every $\alpha < \kappa$, so we may assume that for some $G$ we have $F_\alpha=G$ and $C_\alpha\in \mathcal{C}_G$ for every $\alpha < \kappa$.  We have that $C_\alpha \subset C$ for every $\alpha$ and $G \subset F$. So $C_\alpha \cup G \subset C \cup F$ for every $\alpha < \kappa$, contradicting the fact that $\mathcal{C}$ is $(\kappa, \aleph_1)$-sparse.

Viceversa, suppose that $\mathcal{B}$ is a $\kappa^{op}$-like base for $X$ and let $F \in PR(X)$ be a point such that $p \in F$. Let $\{[F, \mu \setminus C]: C \in \mathcal{C} \}$ be a \emph{standard} local base at $F$ refining elements of $\mathcal{B}$. We claim that $\{C \cup F: C \in \mathcal{C} \}$ is a $(\kappa, \aleph_1)$-sparse cofinal family of countable subsets of $\mu$. Indeed, suppose by contradiction that $C_\alpha \cup F \subset C \cup F$, for every $\alpha < \kappa$. Then $F \cap (C_\alpha \cup C)=\emptyset$ implies that $C_\alpha \subset C$ and hence $[F, \mu \setminus C]\subset [F, \mu \setminus C_\alpha]$, which contradicts the fact that $\mathcal{B}$ is $\kappa^{op}$-like.
\end{proof}

\begin{corollary} \label{corequiv}
The following are equivalent:

\begin{enumerate}
\item \label{sparse} There is a cofinal $(\kappa, \aleph_1)$-sparse family on $([\mu]^\omega, \subseteq)$.
\item \label{box} $Nt((2^\mu, \tau_{<\omega_1})) \leq \kappa$.
\item \label{pixley} $Nt(PR(A_{\omega_1}(\mu)) \leq \kappa$.
\end{enumerate}
\end{corollary}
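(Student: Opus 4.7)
Since Theorem \ref{pixleytheorem} already establishes the equivalence of (\ref{sparse}) and (\ref{pixley}), my plan is to prove (\ref{sparse}) $\iff$ (\ref{box}) directly. The approach exploits the natural correspondence between standard countably-supported basic open sets of $(2^\mu, \tau_{<\omega_1})$ and their supports, which are members of $[\mu]^\omega$. In effect, the $\kappa^{op}$-like property of a base of the box product will translate into a statement about how supports sit inside one another, and this is exactly what $(\kappa, \aleph_1)$-sparseness is designed to control. Throughout I write $[s]$ for the basic open set determined by a countable partial function $s$ from $\mu$ to $2$, and $0_A$ for the zero function on $A \in [\mu]^\omega$.

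For (\ref{sparse}) $\Rightarrow$ (\ref{box}), given a $(\kappa, \aleph_1)$-sparse cofinal family $\mathcal{F} \subseteq [\mu]^\omega$, I would set $\mathcal{B} = \{[s] : s \in 2^A, A \in \mathcal{F}\}$ and use cofinality of $\mathcal{F}$ to verify that every standard basic open set refines to one of this form, so $\mathcal{B}$ is a base. For $\kappa^{op}$-likeness, suppose some $[s] \in \mathcal{B}$ refines $\kappa$ many pairwise distinct $[s_\alpha] \in \mathcal{B}$; then each $s_\alpha = s \upharpoonright dom(s_\alpha)$, so distinctness of the $[s_\alpha]$ forces distinctness of the supports $dom(s_\alpha) \in \mathcal{F}$. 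All of these supports sit inside the countable set $dom(s)$, contradicting sparsity.

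For (\ref{box}) $\Rightarrow$ (\ref{sparse}), given a $\kappa^{op}$-like base $\mathcal{B}$, I would fix the point $\bar{0} \in 2^\mu$ and, for each $B \in \mathcal{B}$ with $\bar{0} \in B$, choose a countable $A_B \subseteq \mu$ such that $[0_{A_B}] \subseteq B$. The family $\mathcal{F} = \{A_B : B \in \mathcal{B}, \bar{0} \in B\}$ is cofinal in $[\mu]^\omega$ because the standard local base at $\bar{0}$ is indexed by $[\mu]^\omega$. For sparsity, suppose $\kappa$ distinct members $A_{B_\alpha}$ of $\mathcal{F}$ are contained in some countable $D \subseteq \mu$. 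Since $A_B$ is a function of $B$, the $B_\alpha$ themselves are distinct, and each satisfies $[0_D] \subseteq [0_{A_{B_\alpha}}] \subseteq B_\alpha$. Choosing any $B' \in \mathcal{B}$ with $\bar{0} \in B' \subseteq [0_D]$ then exhibits $\kappa$ distinct members of $\mathcal{B}$ above a common element $B'$ of $\mathcal{B}$, contradicting $\kappa^{op}$-likeness.

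The only delicate point I foresee is the bookkeeping that turns distinct elements of the sparse family into distinct elements of the base, and conversely. In the forward direction this is handled by the uniqueness of restrictions of $s$; in the backward direction it follows tautologically from the functional choice $B \mapsto A_B$. Neither step should present serious difficulty once the correspondence between supports and basic open sets is laid out cleanly.
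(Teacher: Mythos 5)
Your proposal is correct. For the equivalence $(\ref{sparse}) \iff (\ref{pixley})$ you do exactly what the paper does, namely invoke Theorem \ref{pixleytheorem}. The difference is in $(\ref{sparse}) \iff (\ref{box})$: the paper simply cites \cite{KMS} for this, whereas you supply a self-contained argument. Your argument is sound: in the forward direction the key facts are that $[s] \subseteq [s_\alpha]$ forces $s_\alpha \subseteq s$ (because each factor of $2^\mu$ has two points, so a coordinate of $dom(s_\alpha)$ outside $dom(s)$ would let you build a point of $[s]\setminus[s_\alpha]$), and that distinct restrictions of a fixed $s$ must have distinct domains, so a $\kappa$-sized bounded family in your base yields $\kappa$ distinct members of $\mathcal{F}$ inside the countable set $dom(s)$, contradicting sparseness; in the backward direction, cofinality of $\{A_B\}$ follows from $\mathcal{B}$ being a base at $\bar{0}$, and an anti-sparse configuration $\{A_{B_\alpha} : \alpha<\kappa\} \subseteq \mathcal{P}(D)$ produces $\kappa$ distinct elements of $\mathcal{B}$ all containing a single $B' \subseteq [0_D]$, contradicting $\kappa^{op}$-likeness. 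What your route buys is a proof readable without chasing the reference, and it makes transparent that the same support-versus-sparseness mechanism underlies both $(\ref{box})$ and $(\ref{pixley})$; what the paper's route buys is brevity and consistency with its policy of treating the box-product equivalence as known background from \cite{KMS}. One cosmetic remark: for full rigour you should note (as you implicitly do) that distinct partial functions $s_\alpha$ give distinct open sets $[s_\alpha]$, which again uses that each factor has at least two points.
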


\begin{proof}
The equivalence $(\ref{sparse}) \iff (\ref{box})$ was proved in \cite{KMS} while the equivalence $(\ref{sparse}) \iff (\ref{pixley})$ follows from Theorem $\ref{pixleytheorem}$.
\end{proof}

From Lemma $\ref{LemmaKMS}$ and Theorem $\ref{pixleytheorem}$ the next corollary follows.

\begin{corollary} {\ \\}
\begin{enumerate}
\item \label{alephn} For every $\mu < \aleph_\omega$, $Nt(PR(A_{\omega_1}(\mu)) \leq \aleph_1$.
\item \label{aleph4} $Nt(PR(A_{\omega_1}(\aleph_\omega)) \leq \aleph_4$.
\item \label{chang} If Chang's Conjecture for $\aleph_\omega$ holds then $Nt(PR(A_{\omega_1}(\aleph_\omega)) \geq \aleph_2$.
\item \label{martin} If Martin's Maximum holds then $Nt(PR(A_{\omega_1}(\aleph_\omega)) \leq \aleph_2$.
\end{enumerate}
\end{corollary}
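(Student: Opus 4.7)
The plan is to apply Theorem \ref{pixleytheorem} in each of the four cases, converting a bound on $Nt(PR(A_{\omega_1}(\mu)))$ into the existence or non-existence of a $(\kappa,\aleph_1)$-sparse cofinal family in $([\mu]^\omega,\subseteq)$, and then to read off the conclusion from the appropriate clause of Lemma \ref{LemmaKMS}.

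For items (2), (3), and (4), which all concern $\mu=\aleph_\omega$, this match-up is immediate. Item (2) follows from clause (4) of Lemma \ref{LemmaKMS}, whose $(\aleph_4,\aleph_1)$-sparse cofinal family in $[\aleph_\omega]^\omega$ plugs directly into the ``if'' direction of Theorem \ref{pixleytheorem}. Item (4) is identical, using clause (3) of the lemma under Martin's Maximum. For item (3) I would instead use the contrapositive: by clause (2) of the lemma, Chang's Conjecture for $\aleph_\omega$ rules out any $(\aleph_1,\aleph_1)$-sparse cofinal family in $[\aleph_\omega]^\omega$, so the ``only if'' direction of Theorem \ref{pixleytheorem} forces $Nt(PR(A_{\omega_1}(\aleph_\omega)))>\aleph_1$, which, being a cardinal, is at least $\aleph_2$.

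For item (1), the translation via Theorem \ref{pixleytheorem} reduces matters to exhibiting, in ZFC, a $(\aleph_1,\aleph_1)$-sparse cofinal family in $([\mu]^\omega,\subseteq)$ for each $\mu<\aleph_\omega$. For $\mu\leq\aleph_0$ any countable cofinal family is vacuously sparse. For $\mu=\aleph_1$ one can take $\{\alpha:\omega\leq\alpha<\omega_1\}$, identifying each countable ordinal with its own downset: the family is cofinal in $[\omega_1]^\omega$, and any uncountable subfamily is cofinal in $\omega_1$, hence has union of size $\aleph_1$. For $\mu=\aleph_n$ with $n\geq 2$ I would argue by induction on $n$, fixing bijections $\phi_\alpha:\alpha\to|\alpha|$ for $\alpha<\aleph_n$ and pulling back a sparse cofinal family on $[\aleph_{n-1}]^\omega$ along these to obtain one on $[\aleph_n]^\omega$.

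The main obstacle I foresee is in item (1), namely verifying that the inductively pulled-back family remains $(\aleph_1,\aleph_1)$-sparse: a priori, uncountably many pullbacks could be crammed into a single countable subset of $\aleph_n$ through different $\phi_\alpha$'s. To rule this out I would fix the $\phi_\alpha$ canonically (for instance, least in a well-ordering of $H(\theta)$), and apply the inductive sparseness of the family on $[\aleph_{n-1}]^\omega$ after projecting along a single $\phi_\alpha$. Failing that, one can bypass the induction by invoking a PCF-theoretic construction that directly and uniformly produces the required sparse cofinal family below $\aleph_\omega$.
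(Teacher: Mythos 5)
Your reduction of all four items to Theorem $\ref{pixleytheorem}$ combined with Lemma $\ref{LemmaKMS}$ is exactly the paper's route (the paper offers no further argument), and items (2), (3) and (4) are handled correctly: clauses (4), (3) and (2) of the lemma plug into the ``if'', ``if'' and (contrapositive of the) ``only if'' directions of the theorem, respectively. The problem is item (1), where you correctly observe that neither statement hands you the required ZFC $(\aleph_1,\aleph_1)$-sparse cofinal family in $[\aleph_n]^\omega$ and you try to build one by induction. Your base cases ($\mu\leq\aleph_1$) are fine, but the inductive step as written fails, and the obstacle you flag is fatal to the construction as proposed: if you set $\mathcal{F}_{n+1}=\{\phi_\beta[F]:\aleph_n\leq\beta<\aleph_{n+1},\ F\in\mathcal{F}_n\}$, then uncountably many \emph{distinct} indices $\beta$ can each contribute one set $\phi_\beta[F_\beta]$, and nothing prevents these pairwise distinct countable sets from all being subsets of a single countable set such as $\omega$ (there are continuum many infinite subsets of $\omega$ available as images). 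Choosing the $\phi_\beta$ canonically gives no control whatsoever over where $\phi_\beta[F]$ lands, and ``projecting along a single $\phi_\alpha$'' only disposes of the easy case in which one index carries uncountably many members of the subfamily; the closing appeal to an unspecified PCF construction is not a proof.

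The standard repair is to make each member of the family remember its index: put $\mathcal{F}_{n+1}=\{\phi_\beta[F]\cup\{\beta\}:\aleph_n\leq\beta<\aleph_{n+1},\ F\in\mathcal{F}_n\}$, noting $\beta\notin\phi_\beta[F]\subseteq\beta$. Cofinality still holds because every countable subset of $\aleph_{n+1}$ is bounded. For sparseness, suppose an uncountable subfamily $\mathcal{G}$ had countable union $A$; fixing one representation $(\beta_G,F_G)$ for each $G\in\mathcal{G}$, all tags $\beta_G$ lie in $A$, so by the pigeonhole principle a single $\beta$ tags uncountably many $G\in\mathcal{G}$, and for these the sets $F_G$ are pairwise distinct (recover $F_G$ as $\phi_\beta^{-1}[G\setminus\{\beta\}]$). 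By the inductive hypothesis $\bigcup F_G$ is uncountable, hence so is $\bigcup\mathcal{G}\supseteq\phi_\beta\bigl[\bigcup F_G\bigr]$, a contradiction. With this correction item (1) goes through; alternatively one can simply invoke the equivalence of Corollary $\ref{corequiv}$ together with the ZFC fact from \cite{KMS} that $Nt((2^{\aleph_n},\tau_{<\omega_1}))=\aleph_1$ for $n<\omega$, which is evidently what the paper leaves implicit.
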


\begin{corollary}
The gap between $Nt(PR(X))$ and $Nt(X)$ can be as large as the gap between $\aleph_4$ and $\aleph_\alpha$ for any $\alpha<\omega_1$.
\end{corollary}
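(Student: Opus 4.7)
The plan is to take a single space and let the model of ZFC do the work: set $X = A_{\omega_1}(\aleph_\omega)$, the one-point Lindel\"ofication of a discrete space of size $\aleph_\omega$, and argue that the upper bound on $Nt(PR(X))$ is absolute while $Nt(X)$ can be pushed up to any $\aleph_\alpha$ with $\alpha<\omega_1$ by choosing the model appropriately. The upper bound is free: item (\ref{aleph4}) of the previous corollary gives $Nt(PR(X))\leq\aleph_4$ in ZFC, since it rests on Theorem \ref{pixleytheorem} together with the ZFC statement Lemma \ref{LemmaKMS}(4) about the existence of an $(\aleph_4,\aleph_1)$-sparse cofinal family in $([\aleph_\omega]^\omega,\subseteq)$. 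So the whole task is to arrange $Nt(X)\geq \aleph_\alpha$ in some model of ZFC, for each countable ordinal $\alpha$.

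For the lower bound I would apply Theorem \ref{lindelofication} with $\kappa=\aleph_\omega$ and $\mu=\omega_1$, observing $cov(\aleph_\omega,<\omega_1)=cov(\aleph_\omega,\omega)$; this yields $Nt(X)=cov(\aleph_\omega,\omega)^+$ provided $cf(cov(\aleph_\omega,\omega))>\omega_1$. The problem thus reduces to the consistency statement: for every $\alpha<\omega_1$, there is a model of ZFC in which $cov(\aleph_\omega,\omega)$ is a regular cardinal at least $\aleph_\alpha$. In such a model the cofinality hypothesis is automatic (any regular $\aleph_\beta\geq\aleph_2$ has cofinality exceeding $\omega_1$) and then $Nt(X)\geq \aleph_\alpha$, while the ZFC fact $Nt(PR(X))\leq \aleph_4$ still applies.

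To produce the required model I would appeal to the standard PCF-compatible forcing technology (Magidor-Shelah style): starting from a suitably large cardinal, singularize an appropriate large cardinal down to $\aleph_\omega$ by a Prikry-type forcing, combined with Cohen-style blowing up of powers, so that $2^{\aleph_\omega}$ equals any prescribed regular cardinal $\aleph_\beta$ strictly below Shelah's PCF bound $\aleph_{\omega_4}$; in any such model $cov(\aleph_\omega,\omega)=2^{\aleph_\omega}=\aleph_\beta$. Since $\aleph_\alpha<\aleph_{\omega_1}<\aleph_{\omega_4}$ for every countable $\alpha$, one can choose $\beta>\alpha$, and the conclusion $Nt(X)=\aleph_\beta^+>\aleph_\alpha$ follows. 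The main obstacle is purely set-theoretic, namely invoking the right blow-up construction to secure both the size and the regularity of $cov(\aleph_\omega,\omega)$; the topological steps are already packaged in Theorems \ref{lindelofication} and \ref{pixleytheorem}, so once the consistency input is in place the rest is assembly.
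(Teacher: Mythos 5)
Your proposal is correct and follows essentially the same route as the paper: take $X=A_{\omega_1}(\aleph_\omega)$, use the ZFC bound $Nt(PR(X))\leq\aleph_4$ from Lemma \ref{LemmaKMS}(4), and invoke Shelah's consistency results (the paper cites \cite{Sh} for models where $\aleph_\omega$ is strong limit and $2^{\aleph_\omega}=\aleph_{\alpha+1}$, whence $cov(\aleph_\omega,\omega)=\aleph_{\alpha+1}$) to push $Nt(X)=cov(\aleph_\omega,\omega)^+$ above $\aleph_\alpha$ via Theorem \ref{lindelofication}.
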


\begin{proof}
Assuming the consistency of a supercompact cardinal Shelah \cite{Sh} proved the consistency of \emph{$\aleph_\omega$ is strong limit and $2^{\aleph_\omega}=\aleph_{\alpha+1}$}, for every $\alpha<\omega_1$. It is easy to see that in the resulting model $2^{\aleph_\omega}=(\aleph_\omega)^\omega$ and since $(\aleph_\omega)^\omega=cov(\aleph_\omega, \omega) \cdot 2^\omega$ and $2^\omega<\aleph_\omega$ we have that $cov(\aleph_\omega, \omega)=\aleph_{\alpha+1}$.  So it suffices to take $X$ to be the one-point lindel\"ofication of a discrete set of size $\aleph_\omega$.
\end{proof}

\begin{question}
Is there a space $X$ such that the gap between $Nt(X)$ and $Nt(PR(X))$ is not bounded in ZFC?
\end{question}


\begin{thebibliography}{10}
\bibitem{BBBGLM} Z. Balogh, H. Bennett, D. Burke, G. Gruenhage, D. Lutzer and J. Mashburn, \emph{OIF spaces}, Questions Answers Gen. Topology \textbf{18} (2000), no. 2, 12--141. 
\bibitem{En} R. Engelking, \emph{General Topology}, Translated from the Polish by the author. Second edition. Sigma Series in Pure Mathematics, 6. Heldermann Verlag, Berlin, 1989.
\bibitem{HJ} A. Hajnal and I. Juh\'asz, \emph{When is a Pixley-Roy hyperspace ccc?}, Topology Appl. \textbf{13} (1982), 33--41.
\bibitem{Je} T. Jech, \emph{Set Theory: The third millennium edition, revised and expanded}, Springer Monographs in Mathematics. Springer-Verlag, Berlin, 2003.
\bibitem{JSS} I. Juh\'asz, L. Soukup and Z. Szentmikl\'ossy, \emph{First countable spaces without point-countable $\pi$-bases}, Fund. Math. \textbf{196} (2007), 139--149.
\bibitem{vD} E.K. van Douwen, \emph{The Pixley-Roy topology on spaces of subsets}, Set-theoretic topology (Papers, Inst. Medicine and Math., Ohio Univ., Athens, Ohio, 1975-1976), pp. 111-134, Academic Press, New York, 1977.
\bibitem{KMS} M. Kojman, D. Milovich and S. Spadaro, \emph{Order-theoretic properties of bases in topological spaces, I}, submitted, arXiv:1012.3966.
\bibitem{LMS} J.P. Levinski , M. Magidor and S. Shelah, \emph{Chang's conjecture for $\aleph_\omega$}, Israel J. Math. 69 (1990), no. 2, 161--172.
\bibitem{Ma} V.I. Malykhin, \emph{On Noetherian spaces}, Amer. Math. Soc. Transl. \textbf{134} (1987), no.2, 83--91.
\bibitem{MilHom} D. Milovich, \emph{Noetherian types of homogeneous compacta and dyadic compacta}, Topology Appl. \textbf{156} (2008), 443--464.
\bibitem{PNt} S.A. Peregudov, \emph{On the Noetherian type of topological spaces}, Comment. Math. Univ. Carolin. \textbf{38} (1997), no. 3, 581--586. 
\bibitem{PS} S.A. Peregudov and B.E. Shapirovskii, \emph{A class of bicompacta}, Soviet Math. Dokl. \textbf{17} (1976), no. 5, 1296--1300.
\bibitem{Sh} S. Shelah, \emph{The singular cardinals problem: independence results.} In A. R. D. Mathias, editor, Surveys in Set Theory, volume 87 of London Mathematical Society Lecture Note Series, pages 116--134. Cambridge University Press, Cambridge, 1983.
\bibitem{S} L. Soukup, \emph{A note on the Noetherian type of spaces},  arXiv:1003.3189.
\end{thebibliography}
\end{document}